\definecolor{brown}{RGB}{150,100,0}
\definecolor{purple}{RGB}{150,0,100}
\definecolor{grey}{RGB}{128,128,128}
\numberwithin{equation}{section}
\theoremstyle{plain}
\newtheorem{lemma}{Lemma}[section]
\newtheorem{proposition}[lemma]{Proposition}
\newtheorem{theorem}[lemma]{Theorem}
\newtheorem{corollary}[lemma]{Corollary}
\theoremstyle{definition}
\newtheorem{definition}[lemma]{Definition}
\newtheorem{remark}[lemma]{Remark}
\newtheorem{example}[lemma]{Example}
\begin{document}
\newcommand{\R}{{\mathbb R}}
\newcommand{\C}{{\mathbb C}}
\newcommand{\E}{{\mathbb E}}
\newcommand{\F}{{\mathbb F}}
\renewcommand{\O}{{\mathbb O}}
\newcommand{\Z}{{\mathbb Z}} 
\newcommand{\N}{{\mathbb N}}
\newcommand{\Q}{{\mathbb Q}}
\renewcommand{\H}{{\mathbb H}}
\newcommand{\T}{{\mathbf T}}

\newcommand{\Aa}{{\mathcal A}}
\newcommand{\Bb}{{\mathcal B}}
\newcommand{\Cc}{{\mathcal C}}    
\newcommand{\Dd}{{\mathcal D}}
\newcommand{\Ee}{{\mathcal E}}
\newcommand{\Ff}{{\mathcal F}}
\newcommand{\Gg}{{\mathcal G}}    
\newcommand{\Hh}{{\mathcal H}}
\newcommand{\Kk}{{\mathcal K}}
\newcommand{\Ii}{{\mathcal I}}
\newcommand{\Jj}{{\mathcal J}}
\newcommand{\Ll}{{\mathcal L}}    
\newcommand{\Mm}{{\mathcal M}}    
\newcommand{\Nn}{{\mathcal N}}
\newcommand{\Oo}{{\mathcal O}}
\newcommand{\Pp}{{\mathcal P}}
\newcommand{\Qq}{{\mathcal Q}}
\newcommand{\Rr}{{\mathcal R}}
\newcommand{\Ss}{{\mathcal S}}
\newcommand{\Tt}{{\mathcal T}}
\newcommand{\Uu}{{\mathcal U}}
\newcommand{\Vv}{{\mathcal V}}
\newcommand{\Ww}{{\mathcal W}}
\newcommand{\Xx}{{\mathcal X}}
\newcommand{\Yy}{{\mathcal Y}}
\newcommand{\Zz}{{\mathcal Z}}
\newcommand{\V}{{\mathcal V}}

\newcommand{\zt}{{\tilde z}}
\newcommand{\xt}{{\tilde x}}
\newcommand{\Ht}{\widetilde{H}}
\newcommand{\ut}{{\tilde u}}
\newcommand{\Mt}{{\widetilde M}}
\newcommand{\Llt}{{\widetilde{\mathcal L}}}
\newcommand{\yt}{{\tilde y}}
\newcommand{\vt}{{\tilde v}}
\newcommand{\Ppt}{{\widetilde{\mathcal P}}}
\newcommand{\bp }{{\bar \partial}}

\newcommand{\ad}{{\rm ad}}
\newcommand{\Om}{{\Omega}}
\newcommand{\om}{{\omega}}
\newcommand{\eps}{{\varepsilon}}
\newcommand{\Di}{{\rm Diff}}
\newcommand{\vol}{{\rm vol}}
\newcommand{\Pro}[1]{\noindent {\bf Proposition #1}}
\newcommand{\Thm}[1]{\noindent {\bf Theorem #1}}
\newcommand{\Lem}[1]{\noindent {\bf Lemma #1 }}
\newcommand{\An}[1]{\noindent {\bf Anmerkung #1}}
\newcommand{\Kor}[1]{\noindent {\bf Korollar #1}}
\newcommand{\Satz}[1]{\noindent {\bf Satz #1}}

\renewcommand{\a}{{\mathfrak a}}
\renewcommand{\b}{{\mathfrak b}}
\newcommand{\e}{{\mathfrak e}}
\renewcommand{\k}{{\mathfrak k}}
\newcommand{\pg}{{\mathfrak p}}
\newcommand{\g}{{\mathfrak g}}
\newcommand{\gl}{{\mathfrak gl}}
\newcommand{\h}{{\mathfrak h}}
\renewcommand{\l}{{\mathfrak l}}
\newcommand{\sm}{{\mathfrak m}}
\newcommand{\n}{{\mathfrak n}}
\newcommand{\s}{{\mathfrak s}}
\renewcommand{\o}{{\mathfrak o}}
\newcommand{\so}{{\mathfrak so}}
\renewcommand{\u}{{\mathfrak u}}
\newcommand{\su}{{\mathfrak su}}
\newcommand{\ssl}{{\mathfrak sl}}
\newcommand{\ssp}{{\mathfrak sp}}
\renewcommand{\t}{{\mathfrak t }}
\newcommand{\X}{{\mathfrak X}}

\newcommand{\pb}{{\mathbf p}}
\newcommand{\rk}{{\rm rk}}
\newcommand{\grad}{{\nabla}}

\newcommand{\Cinf}{C^{\infty}}
\newcommand{\la}{\langle}
\newcommand{\ra}{\rangle}
\newcommand{\half}{\scriptstyle\frac{1}{2}}
\newcommand{\p}{{\partial}}
\newcommand{\notsub}{\not\subset}
\newcommand{\iI}{{I}}               
\newcommand{\bI}{{\partial I}}      
\newcommand{\LRA}{\Longrightarrow}
\newcommand{\LLA}{\Longleftarrow}
\newcommand{\lra}{\longrightarrow}
\newcommand{\LLR}{\Longleftrightarrow}
\newcommand{\lla}{\longleftarrow}
\newcommand{\INTO}{\hookrightarrow}
\newcommand{\sppt}{{\mathrm  sppt}}

\newcommand{\QED}{\hfill$\Box$\medskip}
\newcommand{\UuU}{\Upsilon _{\delta}(H_0) \times \Uu _{\delta} (J_0)}
\newcommand{\bm}{\boldmath}

\newcommand{\er}{\mathbb{R}}
\newcommand{\ce}{\mathbb{C}}
\newcommand{\be}{\begin{equation}}
\newcommand{\bel}[1]{\begin{equation}\label{#1}}
\newcommand{\qe}{\end{equation}}
\newcommand{\ee}{\end{equation}}
\newcommand{\eeq}{\end{equation}}
\newcommand{\ba}{\begin{eqnarray}}
\newcommand{\ea}{\end{eqnarray}}
\newcommand{\rf}[1]{(\ref{#1})}

\keywords{statistical model, diffeology, the Fisher metric,  probabilistic mapping,  Cram\'er-Rao inequality}
\subjclass[2010]{62B-05, 62F-10}

\title[The Fisher  metric and probabilistic mappings]{Diffeological statistical models, the Fisher metric   and  probabilistic  mappings}

\author[H. V. L\^e]{H\^ong V\^an L\^e}
\address{Institute of Mathematics, Czech Academy of Sciences, Zitna 25, 11567 Praha 1, Czech Republic}
\email{hvle@math.cas.cz}

\thanks{Research of HVL  was supported by RVO:67985840 and the GA\v CR-project 18-01953J}
\begin{abstract}
In this note   we  introduce  the notion of a $C^k$-diffeological statistical model, which allows us to apply the  theory  of diffeological spaces to   (possibly  singular)  statistical models.  In particular, we  introduce a class of  almost 2-integrable  $C^k$-diffeological statistical models  that  encompasses  all known statistical models    for which the Fisher metric is  defined. This class   contains a statistical  model which    does not  appear  in the  Ay-Jost-L\^e-Schwachh\"ofer theory of parametrized   measure  models.  Then we  show that  for any positive integer  $k$  the   class  of  almost 2-integrable $C^k$-diffeological  statistical models  is preserved under     probabilistic mappings.  Furthermore,   the monotonicity theorem  for the Fisher metric     also holds  for this class.  As  a consequence, the  Fisher  metric
	on an almost  2-integrable $C^k$-diffeological  statistical  model  $P \subset \Pp(\Xx)$  is preserved
	under   any probabilistic   mapping  $T: \Xx \leadsto \Yy$  that is sufficient  w.r.t. $P$.  Finally we extend the  Cram\'er-Rao inequality  to the   class of  2-integrable $C^k$-diffeological
	statistical models.
\end{abstract}
\maketitle

	
	
	\section{Introduction}
	In  mathematical statistics,  the  notion of {\it a   statistical model}  and  the notion  of   {\it a parametrized  statistical model}   are of central importance \cite{McCullagh2000}. 
	For a measurable   space $\Xx$,  let us denote by $\Pp(\Xx)$ the space of all probability measures
	on $\Xx$. 
	According to currently accepted theories, see  e.g. \cite{McCullagh2000} and references therein,
	a statistical
	model is a  subset $P_\Xx\subset \Pp(\Xx)$  and a parameterized
	statistical model is a parameter set $\Theta$ together with a mapping  $\pb: \Theta \to \Pp(\Xx)$.
	The  image  $\pb(\Theta)\subset  \Pp(\Xx)$  is   a statistical  model   endowed  with the parametrization  $\pb:  \Theta \to \pb(\Theta)$.  If   the  parameter  set $\Theta$  is a  smooth   manifold,  then we can  study  a statistical model $\pb(\Theta)$, endowed  with   a  parametrization $\pb: \Theta \to \pb(\Theta)\subset \Pp(\Xx)$, by  applying      differential  geometric  techniques  to $\Theta$ and   to smooth mappings  $\pb : \Theta \to \Pp(\Xx)$.  This idea   lies  in the  heart  of  the  field  {\it Information Geometry},  which    is the domain  of  mathematical statistics       where  we study (parameterized)    statistical models  using techniques of differential geometry \cite{Chentsov1972}, \cite{Amari1985}, \cite{Amari2016}, \cite{AJLS2017}.  
	In the book ``Information Geometry"   by Ay-Jost-L\^e-Schwachh\"ofer,  a    parameterized  statistical model   is a triple $(M, \Xx, \pb)$ where
	$M$ is a Banach manifold, $\Xx$ is a measurable   space,  and $i \circ \pb: M \stackrel{\pb}{\to} \Pp (\Xx)\stackrel{i}{\to} \Ss(\Xx)$ is a  $C^1$-map. Here   $\Ss
	(\Xx)$ is the Banach  space  of  all signed finite measures  on $\Xx$  endowed  with the  total variation  norm  $\| \cdot \|_{TV}$ and  $i$  is  the natural  inclusion.  We would like to emphasize  that the concept of a parametrized  statistical  model introduced in 
	\cite{AJLS2015, AJLS2017, AJLS2018} encompasses    statistical models endowed  with   the  structure  of   a finite  dimensional  manifold  \cite{Chentsov1972}, \cite{Amari1985, AN2000}, or   with  the structure  of  an infinite  dimensional    Banach manifold  \cite{PS1995}.   The  theory  of parametrized measure  models,  moreover, allows us    to study    {\it  singular  statistical models $P_\Xx$}   using   differential    geometric techniques,  if $P_\Xx$  is endowed  with a
	parameterization  by  a  Banach  manifold. 

	In this note, inspired by the theory of  diffeological spaces  founded  by Souriau  and developed further  by many people,    we shall  generalize  the concept  of a  parameterized   statistical model      to  the concept  of a
	{\it   $C^k$-diffeological statistical model}  $P \subset \Pp(\Xx)$ which, by definition,  is a subset in  $\Pp(\Xx)$   endowed  with  a {\it compatible $C^k$-diffeology}. We shall  show that  the  concept  of     a  $C^k$-diffeological  statistical  model  is more flexible  than   the concept  of a  parameterized  statistical model. In  particular,
	the image  $\pb (M)$ of any parameterized    statistical model  $(M, \Xx, \pb)$ has  a natural  compatible  $C^1$-diffeology. Moreover,  for any $k \in \N^+ \cup \infty$, any subset  in $\Pp(\Xx)$ can be provided  with a  compatible  $C^k$-diffeology (and hence it has a structure  of  a  $C^k$-diffeological    statistical model).  Furthermore,  not every  subset  in $\Pp(\Xx)$ can be written as $\pb (M)$  for some    parameterized    statistical model $(M, \Xx, \pb)$. Hence   the    class  of $C^1$-diffeological statistical models  is larger  than the class of    statistical  models   parameterized  by     Banach manifolds   as in Ay-Jost-L\^e-Schwachh\"ofer's theory.   We   also     extend  conceptually many    results  in Ay-Jost-L\^e-Schwachh\"ofer's theory  concerning  differential geometry of parametrized   statistical models and their application to   statistics  to  the class of $C^k$-diffeological statistical models, using the  theory  of probabilistic  mappings developed in
	a recent  work by   Jost-L\^e-Luu-Tran \cite{JLLT2019}. 
	
	Our note is organized as follows.  In the second   section we  introduce  the notions of $C^k$-diffeological statistical models,    almost 2-integrable  $C^k$-diffeological statistical models,  and  2-integrable    $C^k$-diffeological  statistical models. In the third  section  we recall the notion of  probabilistic mappings  and    related   results in \cite{JLLT2019}
	and   prove  that   the class  of  (almost 2-integrable/ resp. 2-integrable)  $C^k$-statistical  models   is  preserved  under  probabilistic mappings (Theorem \ref{thm:inv}).Then  we  extend  the monotonicity  of the Fisher metric on  2-integrable     parameterized  statistical models to the class  of   almost 2-integrable    $C^k$-diffeological statistical  models (Theorem \ref{thm:mono}). In the last section we  prove      a diffeological   version of the Cram\'er-Rao inequality (Theorem \ref{thm:cr})  which extends  previously known    versions  of the Cram\'er-Rao inequality in \cite{AJLS2017}, \cite{LJS2017b}.
	We  conclude    our  paper   with  a  discussion  on  some future    directions and open questions.

	\section{Almost 2-integrable diffeological statistical models}
	Given  a statistical model $P \subset \Pp (\Xx)$ which we  also denote by $P_\Xx$,  it is known that 
	$P_\Xx$ is endowed  with a {\it natural  geometric structure} induced   from  the Banach  space
	$(\Ss (\Xx), ||, || _{TV})$. 
	\begin{definition} 
		\label{def:tangent} (cf \cite[Definition 3.2, p. 141]{AJLS2017})
		(1) Let $(V, \| \cdot \|)$ be a Banach space,  $X \stackrel{i}{\INTO} V$  an arbitrary  subset,  where $i $ denotes the inclusion,  and $x_0 \in X$. Then $v \in V$  is  called { \it  a tangent vector of $X$  at $x_0$},
		if  there is  a $C^1$-map  $c: \R \to X $, i.e., the    composition  $i\circ  c : \R\to  V$  is a $C^1$-map,   such that $c(0) = x_0$ and $\dot c (0) = v$.   
		
		(2) {\it The  tangent (double) cone} $C_xX$  at a point $ x\in X$ is defined as the   subset  of the tangent   space $T_x V = V$    that consists  of  tangent  vectors  of $X$   at   $x$.  
		The {\it tangent  space} $T_xX$ is  the linear hull  of the
		tangent cone $C_xX$.
		
		(3)  {\it The tangent cone fibration $C X$}  (resp. {\it the tangent fibration $TX$}) is  the union
		$\cup_{x\in X}  C_x X$ (resp.  $\cup_{x\in X} T_xX$), which   is a  subset  of $V \times V$  and therefore it is  endowed with the  induced topology  from  $V \times V$.
	\end{definition}
	
	\begin{remark}\label{rem:comp} (1) The  notion  of a  tangent cone   in Definition \ref{def:tangent}   occurs in a similar fashion  in   the  theory  of singular  spaces,  see  e.g. \cite[\S 3]{LSV2013}, \cite[\S 3]{LSV2015},  \cite[p. 166]{IZ2013}.  
		
		(2) Definition \ref{def:tangent}   differs  from  \cite[Definition 3.1]{AJLS2017}  in that  in  Definition   \ref{def:tangent}   the domain  of a  $C^1$-curve $c$  is  $\R$ and    in \cite{AJLS2017}    the  domain of  a $C^1$-curve  $c$ is $(-\eps, \eps)$. Since
		$(-\eps, \eps)$  is diffeomorphic  to $\R$,  both the two choices  of  the domain of $c$  are   equivalent.
	\end{remark}
	
	\begin{example}\label{ex:mix}   Let  us consider  a   mixture family $P_\Xx$ of probability measures $p_\eta \mu_0$ on $\Xx$ that are dominated by $\mu_0 \in \Pp(\Xx)$, where the density  functions
		$p_\eta$ are of  the following form
		\begin{equation}\label{eq:mix}
		p_\eta(x): =  g^1(x)\eta_1 +  g^2 (x) \eta_1 + g^3 (x)(1-\eta_1 -\eta_2) \text{  for } x\in \Xx.
		\end{equation}
		Here $g^i$,  for $i=1,2,3$, are  nonnegative functions  on $\Xx$
		such that $\E_{\mu_0} (g^i)= 1$ and  $\eta=(\eta_1, \eta_2) \in D_b \subset \R^2$  is a parameter, which will be specified as follows.
		Let us divide   the square $D= [0,1] \times [0,1] \subset \R^2$    in smaller  squares   and color  them in black and white  like a chessboard.
		Let $D_b$  be  the  closure  of the subset of $D$ colored in   black.  If $\eta$ is an interior  point
		of $D_b$  then  $C_{p_\eta}P_\Xx = \R^2$.  If  $\eta$ is a boundary point of $D_b$ then  $C_{p_\eta} P_\Xx = \R$. If  $\eta$ is  a corner  point of $D_b$, then $C_{p_\eta}P_\Xx$ consists  of two intersecting  lines.
	\end{example}
	
	\
	
	$\bullet$ Let $P_\Xx$ be a statistical model.
	Then it is known  that  any $v \in  C_\xi P_\Xx$  is dominated by $\xi$. 
	Hence
	{\it  the logarithmic representation of $v$}  
	\begin{equation}
	\label{eq:logrep}
	\log v: =dv/d\xi
	\end{equation} 
	is an element  of $L^1(\Xx,\xi)$.
	The  set $\{ \log  v|\:  v\in C_\xi P_\Xx\}$  is a  subset  in
	$L^1(\Xx, \xi)$. We denote      it   by  $\log (C_\xi P_\Xx)$ and  will  call  it {\it the logarithmic representation  of $C_\xi P_\Xx$}.
	
	$\bullet$ Next  we want to put   a Riemannian metric on a statistical model $P_\Xx$  i.e., to put a  positive  quadratic  form  $\g$  on each {\it  tangent space  $T_\xi P_\Xx\subset  L^1(\Xx, \xi)$}. 
	The   space $L^1(\Xx, \xi)$ does not have a natural metric  but  its subspace $L^2(\Xx, \xi)$ is a Hilbert space.
	
	\begin{definition}
		\label{def:fisher}
		A statistical  model   $P_\Xx$   will be called {\it  almost 2-integrable}, if 
		\begin{equation}
		\log(C_\xi P_\Xx)   \subset L^2 (\Xx, \xi)\label{eq:logl2}
		\end{equation} 
		for  all $\xi \in P_\Xx$.
		In this case we define   the Fisher metric $\g$  on $P_\Xx$ as  follows. For each $v, w \in C_\xi P_\Xx$ 
		\begin{equation}
		\label{eq:rFisher}  \g_\xi(v, w): = \la \log v, \log w \ra _{ L^2 (\Xx, \xi)} = \int_\Xx\log v \cdot \log w \, d\xi.
		\end{equation}
		
	\end{definition}
	
	Since $T_\xi P_\Xx$  is the linear hull of $C_\xi P_\Xx$, the  formula (\ref{eq:rFisher})
	extends  uniquely  to a  positive quadratic   form on $T_\xi P_\Xx$, which is  called {\it the Fisher metric}.  
	

	\begin{example}\label{ex:classicfisher}
		\label{exer:fisher} Let us reconsider    Example \ref{ex:mix}.  Recall  that  our   statistical model  $P_\Xx$    is parameterized by  a map
		$$\pb:  D_b \to \Ss(\Xx), \eta\mapsto p_\eta \cdot \mu_0,$$
		which is  the restriction  of   the affine map  $L: \R^ 2 \to \Ss(\Xx)$, defined
		by the same formula. Hence  any   tangent  vector $\tilde v \in T_\eta P_\Xx$ can be written as $\tilde  v = d\pb  (v)$ where  $v \in T_\eta D_b$. Note that  for  $ v =  (v_1, v_2) \in  T_\eta D_b$ we have $d\pb ( v) = [ (g^1 - g^3) v_1 + (g^2- g^3)v_2]\mu_0$.  If $g^ i(x) > 0  $ for all $ x\in \Xx$ and $i = 1,2, 3$, then  $p_\eta (x) > 0$ for  all  $x\in \Xx$ and all $ \eta \in D_b$. Therefore
		$$\log d\pb (v)_{|\pb (\eta)}= \frac{ d \pb  (v)}{d (p_\eta \mu_0)}= \frac{(g^1 - g^3) v_1 + (g^2- g^3)v_2}{p_\eta}\in L^1 (\Xx,\pb (\eta)).$$
		Hence $P_\Xx$ is almost 2-integrable,  if  
		$$\frac{g^1 -g^3}{\sqrt p_\eta}, \frac{g^2 - g^3}{\sqrt p_\eta} \in L^2 (\Xx, \mu_0) \: \forall \eta \in D_b.$$
		In this case   we have
		\begin{equation}\label{eq:fisher2}
		\g_{| \pb (\eta)} (d\pb (v), d\pb(w)) = \la \log d\pb (v), \log d\pb(w) \ra _{L^2(\Xx,\pb(\eta))}.
		\end{equation}
	\end{example}

	Next we shall introduce the notion   of      a $C^k$-diffeological   statistical model.  
	\begin{definition}\label{def:diff}  
		For $k \in\N^+ \cup \infty$  and   a nonempty set $X$,   a {\it $C^k$-diffeology} of $X$ is a set $\Dd$ of   mappings  $\pb: U \to  X$,  where $U$ is an open domain  in $\R^n$, and $n$   runs over  nonnegative integers,  such that the three following
		axioms are satisfied.
		
		D1. {\it Covering}. The set $\Dd$ contains the constant mappings ${\bf x}:   r\mapsto x$, defined on $\R^n$, for all $x \in X$  and  for all $n \in \N$.
		
		D2. {\it  Locality}. Let ${\pb}: U \to X$ be a mapping. If for every
		point $r \in U$ there exists  an open neighborhood  $V $ of $r$ such that  ${\pb}_{| V}$ belongs to  $\Dd$ then  the map $\pb$ belongs to $\Dd$.
		
		D3. {\it Smooth compatibility}. For every element $\pb: U \to  X$ of $\Dd$, for every real
		domain $V$, for every  $\psi \in C^k(V, U)$, $\pb \circ \psi$ belongs to $\Dd$.
		
		A {\it $C^k$-diffeological space} is a nonempty set equipped with a $C^k$-diffeology  $\Dd$. Elements  $\pb: U \to X$  of $\Dd$   will be called {\it  $C^k$-maps  from $U$ to $X$}.
		
		A   statistical model  $P_\Xx$  endowed
		with a $C^k$-diffeology $\Dd_\Xx$  will be called  a {\it $C^k$-diffeological statistical  model},  if   for any  map $\pb: U \to P_\Xx$     in $\Dd_\Xx$ the composition $i \circ \pb: U \to  \Ss(\Xx)$ is  a $C^k$-map.  
	\end{definition}
	\begin{remark}\label{rem:diffeo} (1) In \cite{IZ2013}  Iglesias-Zemmour  considered  only  $C^\infty$-diffeologies.  The notion  of a $C^k$-diffeology   given  in Definition \ref{def:diff}  is a straightforward adaptation  of the  concept of    a smooth diffeology
		given in  \cite[\S 1.5]{IZ2013}.
		
		(2) Since  $(\Ss(\Xx), \|\cdot\|_{TV})$ is a Banach space, by
		\cite[Lemma 3.11, p. 30]{KM1997}, a compatible $C^\infty$-diffeology
		on a statistical model $P_\Xx$  is defined by  smooth maps $c: \R \to P_\Xx$.
		
		(3)  Given  a  $C^k$-diffeological  statistical  model  $(P_\Xx, \Dd_\Xx)$  and $\xi \in P_\Xx$,  the tangent cone $C_\xi (P_\Xx, \Dd_\Xx)$  is  the subset  of  $C_\xi P_\Xx$ that consists  of  the tangent  vectors $\dot  c(0)$  of   
		$C^k$-curves $c : \R \to \Xx$  in $\Dd_\Xx$ such that   $c(0)  = \xi$.  Similarly,
		the tangent  space $T_\xi (P_\Xx, \Dd_\Xx)$ is the linear  hull of $C_\xi (P_\Xx, \Dd_\Xx)$.
		
		(4) 
		Let  $(P_\Xx, \Dd_\Xx)$ be a $C^k$-diffeological  statistical model  and $V$  a   locally convex  vector  space. A  map  $\varphi: P_\Xx \to V$ is called {\it Gateaux-differentiable} on
		$(P_\Xx, \Dd_\Xx)$  if   for  any  $C^k$-curve  $c: \R \to P_\Xx$ in $\Dd_\Xx$  the composition $\varphi \circ  c: \R \to V$  is   differentiable.  We recommend \cite{KM1997}   for
		differential calculus  on locally   convex vector spaces.
	\end{remark}
	\begin{example}\label{ex:para}  (1)   Let $(M, \Xx, \pb)$ be a parametrized     statistical model.	Then  $(\pb (M), \Dd_\Xx)$  is  a $C^1$-diffeological statistical model where $\Dd_\Xx$ consists  of all  
		$C^1$-maps  ${\bf q}: \R^n \supset U  \to  \pb (M)$  such that    there exists a $C^1$-map  $\psi^M: U \to M$ and ${\bf q} = \pb \circ \psi^M$.  
		
		(2)  Let $P_\Xx$  be  a statistical model.  Then   $P_\Xx$   can be endowed   with a structure of   a  $C^k$-diffeological statistical model  for  any $k \in \N^+ \cup \infty$,  where its diffeology  $\Dd_\Xx ^{(k)}$   consists  of all mappings ${\pb}: U \to P_\Xx$   such that
		the composition $ i\circ \pb: U \to \Ss(\Xx)$  is of the class  $C^k$, where  $U$ is any open domain in  $\R^n$  for  $n \in \N$.
		
		(3)   Let  $\Xx$ be the closed interval $[0,1]$.   Let  $P_\Xx : =  f \cdot \mu_0$, where $f \in C^\infty (\Xx)$  such that $\int _\Xx f d\mu_0 =1$ and $f(x) > 0$  for  all $x\in \Xx$.
		We claim  that,    there   does not exist  a  parameterized   statistical model
		$(M, \Xx, \pb)$ such that 
		$P_\Xx  =\pb(M)$. Assume the opposite, i.e., 
		there  is a  $C^1$-map  $\pb:  M \to  \Ss(\Xx)$ such that  $\pb  (M)  =  P_\Xx$.
		Then  for any  $m \in  M$ we have  $d\pb ( T_m (M)) =  T_{\pb (m)}P_\Xx = \{ f\in C^\infty(\Xx)|\, \int_\Xx f\, d\mu_0 = 0\}$.  But this is not the case,   since    it is known  that  the space
		$C^\infty ([0,1])$  cannot be the image of   a  linear  bounded  map from  a Banach  space $M$
		to    $L_1 ([0,1])$, see e.g.  \cite[p. 1434]{Grabiner1974}. 
	\end{example}
	
	\begin{definition}\label{def:2-integrable}  
		A $C^k$-diffeological  statistical model $(P_\Xx, \Dd_\Xx)$  will be  called {\it almost 2-integrable}, if
		$\log (C_\xi  (P_\Xx, \Dd_\Xx)) \subset   L^2 (\Xx, \xi)$ for all $\xi \in P_\Xx$.
		
		An almost 2-integrable  $C^k$-diffeological  statistical model $(P_\Xx, \Dd_\Xx)$ will be  called {\it 2-integrable},  if   for  any $C^k$-map $\pb: U \to P_\Xx$ in $\Dd_\Xx$,
		the function $v \mapsto |d\pb(v)|_\g$ is continuous  on $TU$. 
	\end{definition}
	
	\begin{example}\label{ex:2-integr} (1)  By \cite[Theorem 3.2, p. 155]{AJLS2017}, a parameterized  statistical model  $(M, \Xx, \pb)$ is 2-integrable, iff and  only if  $(\pb(M), \pb_*(\Dd_M))$ is a   2-integrable  $C^1$-diffeological   statistical model. 
		
		(2)  The  $C^1$-diffeological statistical  model $(P_\Xx, \Dd_\Xx^{(1)})$ in Example \ref{ex:para}(3)  is 2-integrable, though
		there  is no  parameterized  statistical model  $(M, \Xx, \pb)$ such that $\pb (M) = P_\Xx$.

		(3)   Let $\Xx$ be a measurable space   and   $\lambda$  be    a $\sigma$-finite measure. In \cite[p. 274]{Friedrich1991}  Friedrich considered
		a  family  $P(\lambda):=\{  \mu \in \Pp(\Xx)|\, \mu \ll   \lambda\}$  that is endowed  with   the following  diffeology  $\Dd (\lambda)$.   A  curve $c: \R \to  P (\lambda)$ is a   $C^1$-curve, iff    
		$$\log\dot  c (t) \in  L^2 (\Xx, c(t)).$$
		Hence $(P(\lambda),\Dd(\lambda))$ is     an  almost 2-integrable
		$C^1$-diffeological statistical model. 
	\end{example}
	
	\begin{remark}\label{rem:his}  The axiomatics of Espaces différentiels, which became later the diffeological spaces, were introduced by J.-M. Souriau in the beginning of the eighties \cite{Souriau1980}. Diffeology is a variant of the theory of differentiable spaces, introduced
		and developed a few years before by K.T. Chen \cite{Chen1977}.
		As  I have     worked  with a  different  theory of smooth structures  on  singular spaces \cite{LSV2013, LSV2015},  I appreciate  the elegance  of the theory of diffeology     for its   consistent  and simple    treatment  of   smooth  structures on (possibly infinite dimensional) singular  spaces.  The  best source for diffeology
		is the monograph  by P. Iglesias-Zemmour \cite{IZ2013}.
	\end{remark}
	
	\section{Probabilistic mappings} \label{sec:prop}
	

	In 1962 Lawvere   proposed  a categorical approach to     probability theory,  where  morphisms  are  Markov kernels, and   most importantly, he supplied  the space $\Pp(\Xx)$  with  a natural  $\sigma$-algebra   $\Sigma_w$, making the notion of Markov kernels  and hence
	many  constructions  in  probability theory  and  mathematical statistics functorial.

	Let me recall the definition of $\Sigma_w$.
	Given a measurable space $\Xx$,  let $\Ff_s(\Xx)$ denote
	the  linear space of simple functions on $\Xx$. Recall that $\Ss(\Xx)$ is the  space of all signed finite  measures on $\Xx$. There  is a natural  homomorphism $I: \Ff_s(\Xx) \to \Ss^*(\Xx):= Hom (S(\Xx), \R), \, f \mapsto  I_f$,
	defined  by  integration:  $I_f (\mu):= \int_\Xx f d\mu$  for $f \in \Ff_s(\Xx)$ and $\mu \in \Ss(\Xx)$. 
	Following Lawvere  \cite{Lawvere1962},  we  define $\Sigma_w$ to be  the smallest $\sigma$-algebra  on $\Ss(\Xx)$  such that $I_f$  is measurable  for all $f\in \Ff_s (\Xx)$.   Let  $\Mm(\Xx)$ denote  the space  of all finite  nonnegative measures on $\Xx$. We also  denote  by $\Sigma_w$  the restriction  of  $\Sigma_w$
	to $\Mm (\Xx)$, $\Mm^* (\Xx): = \Mm(\Xx) \setminus \{0\}$,  and  $\Pp (\Xx)$.

	$\bullet$ For a   topological  space $\Xx$   we  shall  consider the natural  Borel  $\sigma$-algebra  $\Bb(\Xx)$.   Then  every  continuous  function is measurable  wrt $\Bb(\Xx)$.  Note that if $\Xx$  is  moreover  a  metric  space then $\Bb(\Xx)$   is the smallest  algebra making measurable  any   continuous function (\cite[Lemma 2.13]{Bogachev2018}).

	$\bullet$  Let  $C_b(\Xx)$ be the space 
	of {\it bounded  continuous functions}  on  a topological space $\Xx$. We denote by $\tau_v$ 
	the smallest topology on  $\Ss(\Xx)$  such that 
	for any  $f\in C_b (\Xx)$    the map $I_f: (\Ss(\Xx), \tau_v)\to \R$ is continuous. We also denote by
	$\tau_v$ the     restriction  of $\tau_v$  to $\Mm(\Xx)$ and $\Pp(\Xx)$, which  is also called  the  {\it weak topology}  that  generates
	the weak convergence of probability measures. 
	It is known that $(\Pp(\Xx), \tau_v)$ is separable, metrizable  if and only if  $\Xx$ is  \cite[Theorem 3.1.4, p. 104]{Bogachev2018}. 
	If $\Xx$ is separable and metrizable then  the Borel $\sigma$-algebra on $\Pp(\Xx)$ generated by $\tau_v$  coincides with $\Sigma_w$.

	\begin{definition}\label{def:prob} (\cite[Definition 2.4]{JLLT2019}) {\it A probabilistic  mapping} (or  {\it an arrow}) 
		from   a measurable space $\Xx$ to a measurable  space $\Yy$  is a
		measurable  mapping  from  $\Xx$ to $ (\Pp (\Yy), \Sigma_w)$.
	\end{definition}
	
	We shall  denote by $\overline{T}: \Xx \to  (\Pp(\Yy), \Sigma_w)$ the measurable  mapping  defining/generating a probabilistic  mapping $T: \Xx \leadsto \Yy$. 
	Similarly, for a measurable mapping $\pb: \Xx \to \Pp(\Yy)$  we shall denote by $\underline{\pb}: \Xx \leadsto \Yy$ the generated probabilistic  mapping.   Note that    a   probabilistic  mapping  is  denoted  by a curved  arrow  and  a measurable mapping by a straight arrow.

	\begin{example}\label{ex:prob} (\cite[Example 2.6]{JLLT2019})	(1) Assume that $\Xx$ is separable and metrizable. Then the  identity mapping $Id_\Pp:(\Pp(\Xx), \tau_v) \to (\Pp (\Xx), \tau_v)$ is  continuous, and hence 
		measurable w.r.t.  the Borel $\sigma$-algebra $\Sigma_w = \Bb(\tau_v)$. Consequently $Id_\Pp
		$  generates    a probabilistic mapping $ev: (\Pp (\Xx), \Bb(\tau_v))  \leadsto  (\Xx, \Bb(\Xx))$  and we write  $\overline {ev} = Id_\Pp$.  Similarly, for any measurable space $\Xx$,   we also  have   an    arrow (a probabilistic mapping)  $ev: (\Pp(\Xx), \Sigma_w) \leadsto \Xx$  generated  by     the  measurable mapping $\overline {ev} = Id_\Pp$.
		
		(2)  Let $\delta_x$ denote   the Dirac  measure concentrated at $x$. It is known that the    map $\delta: \Xx \to (\Pp (\Xx),\Sigma_w), \: x \mapsto \delta (x) : = \delta_x$,  is measurable \cite{Giry1982}. If $\Xx$  is a topological space, then the map
		$\delta: \Xx \to (\Pp(\Xx), \tau_v)$  is continuous, since  the composition $I_f \circ \delta: \Xx \to \R$ is continuous  for any  $f \in C_b (\Xx)$. 
		Hence,  if $\kappa: \Xx \to \Yy$  is a measurable  mapping  between measurable  spaces (resp.  a continuous mapping between  separable  metrizable  spaces),   then   the  map
		$\overline \kappa: \Xx \stackrel{\delta\circ \kappa}{\to } \Pp (\Yy)$  is a measurable mapping (resp. a continuous mapping).   We  regard   $\kappa$ as a probabilistic  mapping defined  by   $\delta \circ \kappa: \Xx \to \Pp(\Yy)$.
		In particular,   the identity  mapping   $ Id : \Xx \to \Xx$  of a measurable   space $\Xx$   is a probabilistic mapping generated by   $ \delta:  \Xx  \to \Pp(\Xx)$.  Graphically speaking,  any straight  arrow (a  measurable mapping)  $\kappa: \Xx \to \Yy$ between  measurable  spaces  can be  seen as  a curved arrow (a probabilistic  mapping).
	\end{example}

	Given  a    probabilistic  mapping   $T:  \Xx \leadsto \Yy$,   we define   a   linear  map   $S_*(T): \Ss(\Xx) \to \Ss(\Yy)$, called {\it Markov morphism}, 
	as follows  \cite[Lemma 5.9, p. 72]{Chentsov1972}
	\begin{equation}\label{eq:markov1}
	S_*(T) (\mu) (B): = \int_{\Xx}\overline T (x) (B)d\mu (x)
	\end{equation}
	for any    $\mu \in \Ss (\Xx)$  and  $B \in \Sigma_\Yy$.

	
	\begin{proposition}\label{prop:markov} Assume that  $T: \Xx  \leadsto \Yy$  is a probabilistic   mapping. 
		
		(1) Then $T$ induces   a linear bounded    map  $S_*(T): \Ss(\Xx) \to \Ss(\Yy)$ w.r.t. the total variation norm $|| \cdot || _{TV}$. The restriction $M_*(T)$  
		of $S_*(T)$ to  $\Mm(\Xx)$  (resp. $P_*(T)$  of $S_*(T)$  to  $\Pp (\Xx)$) maps  $\Mm(\Xx)$ to  $\Mm(\Yy)$ (resp. $\Pp (\Xx)$ to
		$\Pp(\Yy)$).

		(2) Probabilistic mappings   are morphisms in the category of  measurable  spaces,  i.e.,
		for any  probabilistic mappings $T_1: \Xx \leadsto \Yy$ and $ T_2: \Yy \leadsto \Zz$ we have
		\begin{equation}\label{eq:fm}
		M_*(T_2 \circ T_1)= M_*(T_2)\circ M_*(T_1), \: P_*(T_2 \circ T_1)= P_*(T_2)\circ P_*(T_1).
		\end{equation}
		
		(3)	$M_*$ and  $P_*$ are  faithful functors. 
		
		(4)  If $\nu \ll \mu\in  \Mm^*(\Xx)$ then  $M_*(T)(\nu)  \ll M_*(T) (\mu)$.
	\end{proposition}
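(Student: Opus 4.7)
The plan is to verify the four claims in the order in which they are stated, using only the defining formula \eqref{eq:markov1} together with the characterization of $\Sigma_w$ as the smallest $\sigma$-algebra on $\Ss(\Xx)$ making each evaluation $I_{\mathbf{1}_B}: \mu \mapsto \mu(B)$ measurable.

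For (1), I would first check that, for every $B\in \Sigma_\Yy$, the integrand $x \mapsto \overline{T}(x)(B) = I_{\mathbf{1}_B}\circ \overline T(x)$ is $\Sigma_\Xx$-measurable, so that the right-hand side of \eqref{eq:markov1} is defined. Countable additivity of $S_*(T)(\mu)$ follows from monotone convergence applied inside $\int_\Xx \cdot\, d\mu$, and linearity in $\mu$ is immediate. For the TV bound, take the Hahn decomposition $\mu = \mu^+-\mu^-$ and note that for $\mu\geq 0$ one has $S_*(T)(\mu)\geq 0$ and $S_*(T)(\mu)(\Yy) = \int_\Xx \overline T(x)(\Yy)\, d\mu(x) = \mu(\Xx)$; this gives $\|S_*(T)(\mu)\|_{TV}\leq \|\mu\|_{TV}$ and simultaneously shows $M_*(T)(\Mm(\Xx))\subset \Mm(\Yy)$ and $P_*(T)(\Pp(\Xx))\subset \Pp(\Yy)$.

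For (2), the composition $T_2\circ T_1$ is defined so that $\overline{T_2\circ T_1}(x)(C) = \int_\Yy \overline{T_2}(y)(C)\, d\overline{T_1}(x)(y)$, where the integrand is $\Sigma_\Yy$-measurable by the argument above applied to $\overline{T_2}$. Plugging into \eqref{eq:markov1} and applying Fubini--Tonelli (valid because the integrand is nonnegative and measurable) yields
\begin{equation*}
M_*(T_2\circ T_1)(\mu)(C) = \int_\Xx\!\!\int_\Yy \overline{T_2}(y)(C)\, d\overline{T_1}(x)(y)\, d\mu(x) = \int_\Yy \overline{T_2}(y)(C)\, dM_*(T_1)(\mu)(y),
\end{equation*}
the last step being the change-of-variables identity obtained from \eqref{eq:markov1} for $T_1$ applied to indicator functions and extended to bounded measurable integrands by a monotone class argument. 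This is exactly $M_*(T_2)\circ M_*(T_1)(\mu)(C)$, and restricting to $\Pp$ gives the same identity for $P_*$.

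For (3), faithfulness reduces to showing that $\overline T$ is recovered from $M_*(T)$ (and from $P_*(T)$): applying $M_*(T)$ to the Dirac measure $\delta_x$ yields $\overline T(x)$, since $M_*(T)(\delta_x)(B) = \int_\Xx \overline T(\xi)(B)\, d\delta_x(\xi) = \overline T(x)(B)$, and $\delta_x\in\Pp(\Xx)$ so the same reconstruction works for $P_*$. For (4), write $\nu = f\cdot\mu$ with $f\in L^1(\Xx,\mu)$, $f\geq 0$, by Radon--Nikodym; then $M_*(T)(\nu)(B) = \int_\Xx \overline T(x)(B) f(x)\, d\mu(x)$, so if $M_*(T)(\mu)(B)=0$ then $\overline T(x)(B)=0$ for $\mu$-a.e.\ $x$, hence for $\nu$-a.e.\ $x$, and $M_*(T)(\nu)(B)=0$.

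The only genuinely non-routine step is the measurability bookkeeping underpinning (2), namely verifying that $x\mapsto \int_\Yy h(y)\, d\overline{T_1}(x)(y)$ is $\Sigma_\Xx$-measurable for any bounded $\Sigma_\Yy$-measurable $h$; I would prove it first for $h = \mathbf{1}_C$ using $\Sigma_w$-measurability of $\overline{T_1}$, then extend by linearity to $\Ff_s(\Yy)$ and by a monotone class argument to all bounded measurable $h$, which simultaneously justifies the well-definedness of $T_2\circ T_1$ as a probabilistic mapping and the application of Fubini in the composition identity.
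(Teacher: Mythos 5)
Your proposal is correct. Note first that the paper does not actually prove Proposition \ref{prop:markov}: Remark \ref{rem:markov} delegates assertion (1) to Chentsov, (2) to Jost--L\^e--Luu--Tran (extending Giry), (3) to Jost--L\^e--Luu--Tran again, and (4) to Morse--Sacksteder. What you have written is a self-contained reconstruction of the standard arguments behind those citations, and every step is sound: the $\Sigma_w$-measurability of $x\mapsto \overline T(x)(B)$ is exactly what makes the right-hand side of \eqref{eq:markov1} meaningful; the Jordan decomposition together with mass preservation on nonnegative measures gives $\|S_*(T)\mu\|_{TV}\le\|\mu\|_{TV}$ and the invariance of $\Mm$ and $\Pp$; the monotone-class bookkeeping you isolate at the end is precisely what makes the composition $T_2\circ T_1$ a bona fide probabilistic mapping and legitimizes the Fubini step; the Dirac-measure reconstruction $M_*(T)(\delta_x)=\overline T(x)$ gives faithfulness; and the Radon--Nikodym argument for (4) is the Morse--Sacksteder proof. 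Two small points are worth making explicit if you write this up. In (2) you should state which definition of the composition $T_2\circ T_1$ you are taking, namely the Chapman--Kolmogorov kernel $\overline{T_2\circ T_1}(x)=P_*(T_2)\bigl(\overline{T_1}(x)\bigr)$, since the functoriality identity is only as meaningful as that definition; and in (3) faithfulness tacitly uses that a probabilistic mapping \emph{is} the datum of its kernel $\overline T$, so that recovering $\overline T$ from $M_*(T)$ or $P_*(T)$ indeed shows injectivity on hom-sets. Neither point is a gap; your argument buys a fully elementary, reference-free proof where the paper offers only citations.
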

	
	\begin{remark}\label{rem:markov} The first  assertion of Proposition \ref{prop:markov}  is due
		to Chentsov \cite[Lemma 5.9, p.72]{Chentsov1972}.  
		The second assertion  has been proved in \cite[Theorem 2.14 (1)]{JLLT2019},  extending      Giry'  result   in \cite{Giry1982}. The third assertion  has been proved in  \cite{JLLT2019}. 
		The last assertion   of  Proposition  \ref{prop:markov} is due  to Morse-Sacksteder \cite[Proposition 5.1]{MS1966}.
	\end{remark}
	
	We also denote by $T_*$  the map  $S_*(T)$ if no  confusion can arise.
	
	Given  a     probabilistic mapping $ T: \Xx \leadsto \Yy$  and a   $C^k$-diffeological  statistical
	model $(P_\Xx, \Dd_\Xx)$ we define a    $C^k$-diffeological  space  $(T_*(P_\Xx),  T_*(\Dd_\Xx))$    as the image of $\Dd$  by $T$ \cite[\S 1.43, p. 24]{IZ2013}. In other words, a mapping $\pb: U \to T_*(P_\Xx)$  belongs  to $T_*(\Dd_\Xx)$ if and only if it satisfies the following condition.  For every $r \in U$  there exists  an open
	neighborhood $V\subset U$ of $r$   such that  either  $\pb_{| V}$ is a constant mapping, or
	there exists  a  mapping  ${\bf q}: U \to  P_\Xx$  in $\Dd_\Xx$  such that  $\pb_{| V}  = T_* \circ {\bf q}$.

	\begin{theorem}\label{thm:inv}   Let  $T: \Xx  \leadsto \Yy$ be a probabilistic mapping  and
		$(P_\Xx, \Dd_\Xx)$  is a $C^k$-diffeological   statistical   model. 
		
		(1) Then  $(T_*(P_\Xx), T_*(\Dd_\Xx))$ is  a   $C^k$-diffeological  statistical model.
		
		(2)	If $(P_\Xx, \Dd_\Xx)$ is an almost 2-integrable  $C^k$-diffeological    statistical model,   then 
		$(T_* (P_\Xx), T_*(\Dd_\Xx))$  is  also an almost 2-integrable  $C^k$-diffeological  statistical model.
		
		(3)  	If $(P_\Xx, \Dd_\Xx)$ is a 2-integrable    $C^k$-diffeological  statistical model,   then 
		$(T_*(P_\Xx), T_*(\Dd_\Xx))$  is  also a  2-integrable  $C^k$-diffeological   statistical model.
		
	\end{theorem}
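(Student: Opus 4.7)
The plan is to treat the three parts in sequence, exploiting throughout that $T_* := S_*(T):\Ss(\Xx)\to\Ss(\Yy)$ is a bounded linear operator by Proposition \ref{prop:markov}(1), and hence of class $C^\infty$, so that composition with $T_*$ preserves $C^k$-regularity in the Banach-space sense.

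\textbf{Part (1).} The axioms D1 (covering) and D2 (locality) for $T_*(\Dd_\Xx)$ are built into the definition of the image diffeology. For D3, given $\pb\in T_*(\Dd_\Xx)$ and $\psi\in C^k(V,U)$, I would locally express $\pb=T_*\circ {\bf q}$ with ${\bf q}\in\Dd_\Xx$, so that $\pb\circ\psi=T_*\circ ({\bf q}\circ\psi)$; the inner composition lies in $\Dd_\Xx$ by D3 applied to $\Dd_\Xx$. The compatibility condition $i_\Yy\circ\pb\in C^k(U,\Ss(\Yy))$ reduces to $i_\Yy\circ T_*\circ{\bf q}=T_*\circ(i_\Xx\circ{\bf q})$ being $C^k$, which uses $i_\Xx\circ{\bf q}\in C^k(U,\Ss(\Xx))$ and the $C^\infty$-regularity of $T_*$.

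\textbf{Part (2).} Pick $w\in C_\eta(T_*(P_\Xx),T_*(\Dd_\Xx))$ and a $C^k$-curve $\tilde c$ in $T_*(\Dd_\Xx)$ with $\tilde c(0)=\eta$, $\dot{\tilde c}(0)=w$. Near $0$ either $\tilde c$ is constant (so $w=0$, trivially in $L^2$) or $\tilde c=T_*\circ c$ for some $C^k$-curve $c$ in $\Dd_\Xx$ through $\xi$ with $T_*\xi=\eta$; by linearity $w=T_*v$ with $v\in C_\xi(P_\Xx,\Dd_\Xx)$, so $f:=\log v\in L^2(\Xx,\xi)$ by almost 2-integrability of $\Dd_\Xx$. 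Forming the joint measure $d\pi(x,y)=d\overline T(x)(y)\,d\xi(x)$ on $\Xx\times\Yy$, whose marginals are $\xi$ and $T_*\xi$, a direct unfolding shows
\[
\log w(y)=\frac{d(T_*v)}{d(T_*\xi)}(y)=\E_\pi[f(X)\mid Y=y]\quad\text{for }T_*\xi\text{-a.e. }y,
\]
and the conditional Jensen inequality yields $\|\log w\|_{L^2(\Yy,T_*\xi)}^2\le\|f\|_{L^2(\Xx,\xi)}^2<\infty$.

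\textbf{Part (3).} For $\pb\in T_*(\Dd_\Xx)$ with local decomposition $\pb=T_*\circ{\bf q}$, linearity gives $d\pb_r(v)=T_*(d{\bf q}_r(v))$. By hypothesis $(r,v)\mapsto |d{\bf q}_r(v)|_{\g_{{\bf q}(r)}}$ is continuous on $TU$, while Part (2) supplies the pointwise contraction $|d\pb_r(v)|_{\g_{\pb(r)}}\le|d{\bf q}_r(v)|_{\g_{{\bf q}(r)}}$, hence upper semicontinuity. To upgrade this to continuity I would use the joint-measure representation
\[
|d\pb_r(v)|_{\g_{\pb(r)}}^2=\int_\Yy\bigl(\E_{\pi_r}[\log d{\bf q}_r(v)\mid Y](y)\bigr)^2\,d(T_*{\bf q}(r))(y),
\]
locally dominate the family $\{{\bf q}(r)\}$ by a single $\sigma$-finite reference measure (e.g.\ a convex combination over a precompact neighborhood of $r$), and invoke dominated convergence on $\Xx\times\Yy$. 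The main obstacle is exactly this last continuity step: the ambient Hilbert space $L^2(\Xx,{\bf q}(r))$ moves with the basepoint, so upgrading upper semicontinuity to continuity requires embedding the varying norms into a fixed framework; the joint-measure viewpoint with a local common dominant is the cleanest device to make standard convergence arguments available on both sides of $T_*$ simultaneously.
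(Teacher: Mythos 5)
Your parts (1) and (2) are correct. Part (1) is the paper's own (one-line) argument with the diffeology axioms checked explicitly. In part (2) you take a genuinely different route: the paper simply invokes the monotonicity theorem \cite[Corollary 5.1, p.~260]{AJLS2017} to obtain $\|\tfrac{d(T_*v)}{d(T_*\xi)}\|_{L^2(\Yy,T_*\xi)}\le\|\log v\|_{L^2(\Xx,\xi)}$, whereas you re-derive this inequality from scratch via the joint measure $\pi$ on $\Xx\times\Yy$, the identity $\tfrac{d(T_*v)}{d(T_*\xi)}=\E_\pi[\log v(X)\mid Y]$, and conditional Jensen. That computation is correct and makes the step self-contained (it is essentially the proof of the cited result); the paper's version buys brevity at the price of an external reference.

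Part (3) contains a genuine gap. What must be verified is that for every plot $\pb=T_*\circ{\bf q}$ the function $(r,v)\mapsto|d\pb_r(v)|_{\g_{\pb(r)}}$ is \emph{continuous} on $TU$, and your argument does not get there. First, the inference ``pointwise contraction $|d\pb_r(v)|_{\g}\le|d{\bf q}_r(v)|_{\g}$ with continuous majorant, hence upper semicontinuity'' is false: a function dominated by a continuous function is locally bounded but need not be upper semicontinuous. Second, the proposed repair --- dominating $\{{\bf q}(r)\}$ locally by a single reference measure and invoking dominated convergence --- is only a sketch of precisely the hard point: as $r$ varies, the conditional-expectation operator $\E_{\pi_r}[\,\cdot\mid Y]$, the integrand $\log d{\bf q}_r(v)$, and the norms $L^2(\Yy,T_*{\bf q}(r))$ all move simultaneously, and you never produce the dominating function nor the mode of convergence of $\log d{\bf q}_{r_n}(v_n)$ needed to apply the theorem. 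The paper bypasses all of this by quoting \cite[Theorem 5.4, p.~264]{AJLS2017}, which states exactly that $2$-integrability of the parametrized model $(U,\Xx,{\bf q})$ is inherited by $(U,\Yy,T_*\circ{\bf q})$; if you decline to cite that result, you must reproduce its proof, and that is where the real work of part (3) lies.
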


	\begin{proof}  (1) The first  assertion is straightforward, since $T_*: \Ss(\Xx) \to \Ss(\Yy)$ is a linear bounded map by Proposition \ref{prop:markov}(1).
		
		(2) 	 Assume that  $(P_\Xx, \Dd_\Xx)$ is an almost  2-integrable $C^k$-statistical model  and  $v \in C_\xi (P_\Xx, \Dd_\Xx)$.  Then there exits a  $C^k$-map  $c : \R \to  P_\Xx$ in $\Dd_\Xx$   such that  $\frac{d}{dt}_{|t =0} c (\xi) = v$.  Since    $T_*: \Ss(\Xx) \to  \Ss(\Yy)$ is a bounded linear map,    
		$$\frac{d}{dt}_{|t =0}T_* \circ  c = T_* (v).$$
		By the monotonicity  theorem \cite[Corollary 5.1, p. 260]{AJLS2017},
		we  have
		\begin{equation}
		\label{eq:mono}
		\| \frac{d  T_*  v}{d T_*\xi}\|_{L^2(\Yy, T_* \xi)} \le  \| v \|_{L^2(\Xx, \xi)}.
		\end{equation}
		This  proves  that  $(T_*(P_\Xx), T_*(\Dd_\Xx))$ is  almost  2-integrable.
		
		(3)    Assume that  $(P_\Xx, \Dd_\Xx)$ is a  $C^k$-diffeological  statistical  model.
		Let $c: \R \to T_*(P_\Xx)$  be an element  in $T_*(\Dd_\Xx)$.  Then
		$c= T_* \circ  c'$, where  $ c: \R \to P_\Xx$ is an  element  of $\Dd_\Xx$, i.e.,  $i \circ  c: \R \to \Ss(\Xx)$ is  of class  $C^k$ and 
		$(\R, \Xx, c)$ is a  parameterized  2-integrable   statistical model.   By \cite[Theorem 5.4, p. 264]{AJLS2017},  $(\R, \Yy,  T_* \circ c)$  is  a 2-integrable   parameterized  statistical model.  Combining with the   first assertion of Theorem \ref{thm:inv}   this proves  the last  assertion  of Theorem \ref{thm:inv}.
		
	\end{proof}

	Denote by $L(\Xx)$ the space  of  bounded  measurable functions on  a measurable space  $\Xx$.
	Given  a    probabilistic  mapping   $T:  \Xx \leadsto \Yy$,   we define   a   linear  map
	$T^*: L(\Yy) \to L(\Xx)$   as follows \cite[(2.2)]{JLLT2019}
	
	\begin{equation}\label{eq:adjoint1}
	T ^*(f)  (x): = I_f (\overline T (x))  = \int _\Yy f d  \overline T (x),
	\end{equation}
	which coincides  with the  classical formula  (5.1)  in \cite[p. 66]{Chentsov1972} for  the transformation  of  a  bounded measurable  $f$ under  a Markov morphism (i.e.,   a probabilistic mapping)  $T$.
	In particular, if $\kappa: \Xx \to \Yy$ is a measurable mapping,  then   we have $ \kappa^*(f) (x) = f(\kappa(x))$, since  $\overline  \kappa = \delta\circ \kappa$.

	\begin{definition}\label{def:suff}(\cite[Definition 2.22]{JLLT2019}, cf. \cite{MS1966}) Let $P_\Xx \subset \Pp(\Xx)$  and  $P_\Yy \subset \Pp(\Yy)$.
		A  probabilistic  mapping $T: \Xx \leadsto \Yy$    will be called {\it sufficient  for $P_\Xx$}  if  there exists a  probabilistic
		mapping $\underline \pb : \Yy \leadsto \Xx$ such that  for all $\mu \in P_\Xx$   and  $h \in L(\Xx)$ we have
		\begin{equation}
		\label{eq:suff}
		T_*(h \mu) = \underline{\pb}^*(h)T_*(\mu) \text{, i.e., }  \underline{\pb} ^* (h) = \frac{d T_* (h \mu)}{d T_* (\mu)}\in  L^1(\Yy, T_*(\mu)).
		\end{equation}
		In this case 
		we  shall call  the  measurable  mapping $\pb: \Yy \to \Pp(\Xx)$  defining    the probabilistic  mapping $\underline {\pb}: \Yy \leadsto \Xx$
		{\it a conditional  mapping   for  $T$}.
	\end{definition}

	\begin{example}
		\label{ex:suff}
		Assume  that $\kappa:\Xx \leadsto \Yy$  is a   measurable mapping  (i.e., a statistic) which is   a   probabilistic mapping sufficient for  $P_\Xx \subset \Pp(\Xx)$.   Let   $\pb:  \Yy \to \Pp (\Xx), \,  y \mapsto  \pb _y,$ be a conditional  mapping for $\kappa$. By (\ref{eq:adjoint1}),  $\underline\pb^* (1_A)(y)  =  \pb_y(A)$,  and  we  rewrite   (\ref{eq:suff})
		as follows
		\begin{equation}
		\label{eq:suffs}
		\pb_y(A)=  \frac{d\kappa_*(1_A\mu)}{d\kappa_*\mu}\in L^1(\Yy, \kappa_*(\mu)) .
		\end{equation}
		The RHS of (\ref{eq:suffs}) is  the conditional  measure of $\mu$  applied to  $A$  w.r.t. the  measurable mapping $\kappa$.  The   equality (\ref{eq:suffs})  implies  that this   conditional  measure is regular  and independent of  $\mu$. 
		Thus  the notion of sufficiency of a  measurable mapping $\kappa$  for  $P_\Xx$  coincides with the classical notion of   sufficiency  of      $\kappa$  for  $P_\Xx$, see e.g.,  \cite[p. 28]{Chentsov1972}, \cite[Definition 2.8, p. 85]{Schervish1997}.  We   also note that the equality  in (\ref{eq:suffs})  is understood  as  equivalence class  in $L^1(\Yy,  \kappa_*(\mu))$ and hence    every
		statistic $\kappa '$  that  coincides  with a    sufficient statistic   $\kappa$   except  on a zero $\mu$-measure   set,   for all  $\mu \in P_\Xx$,  is also   a sufficient  statistic  for  $P_\Xx$.
	\end{example}

	\begin{example}
		\label{ex:FN} (cf. \cite[Lemma 2.8, p. 28]{Chentsov1972}) Assume  that  $\mu \in \Pp(\Xx)$ has a regular  conditional    distribution    w.r.t.   to a statistic 
		$\kappa: \Xx \to \Yy$, i.e., there  exists  a measurable  mapping
		$\pb : \Yy \to \Pp(\Xx), y \mapsto \pb_y,$  such that
		\begin{equation}\label{eq:cond1}
		\E_\mu ^{\sigma(\kappa)}(1_A | y) = \pb_y(A) 
		\end{equation}
		for any $A \in \Sigma_\Xx$  and  $y \in \Yy$.  
		Let $\Theta$ be a set  and   $P : =\{\nu_\theta \in \Pp(\Xx) | \, \theta \in \Theta\}$  be   a  parameterized family of  probability measures  dominated  by $\mu$. If  there exist  a function
		$h : \Yy \times \Theta \to \R$  such  that  for all $\theta \in \Theta$  and  we have
		\begin{equation}\label{eq:NF}
		\nu_\theta = h(\kappa (x))\mu
		\end{equation}
		then $\kappa $ is sufficient  for  $P$, since    for any $\theta \in \Theta$ 
		$$\pb^* (1_A) = \frac{d\kappa_*  (1 _A  \nu_\theta)}{d\kappa_* \nu_\theta} $$
		does not depend on $\theta$.  The  condition (\ref{eq:NF})  is the Fisher-Neymann sufficiency condition  for a family  of  dominated
		measures.   
	\end{example}
	
	\begin{example}
		\label{ex:inj}  Let   $  \kappa: \Xx \to \Yy$ be a  measurable 1-1  mapping. Then   for  any    statistical  model $P_\Xx \subset \Pp(\Xx)$   the  statistic  $\kappa$ is sufficient  w.r.t.  $\Pp_\Xx$, since   for any  $A \in \Sigma_\Xx$  and any
		$\mu \in \Pp_\Xx$ we have
		$$\frac{d\kappa_*(1_A \mu)}{d\kappa_*\mu}=  (\kappa ^{-1})^* (1_A) \in L^1 (\Yy, \kappa_*(\mu)).$$
	\end{example}

	\
	
	
	Next we shall show that   probabilistic mappings don't increase  the Fisher metrics  on almost 2-integrable $C^k$-diffeological statistical models. Thus the Fisher  metric     serves  as  a ``information quantity"    of almost 2-integrable $C^k$-diffeological statistical models.
	
	\begin{theorem}\label{thm:mono}  Let  $T: \Xx  \leadsto \Yy$ be a probabilistic  mapping  and
		$(P_\Xx, \Dd_\Xx)$ an almost 2-integrable  $C^k$-diffeological statistical model.   Then
		for   any  $\mu \in P_\Xx$  and  any $v \in T_\mu (P_\Xx, \Dd_\Xx)$  we have
		$$  \g_\mu  (v, v) \ge \g _{T_*\mu}  (T_*v, T_*v)$$
		with the equality    if  $T$ is sufficient  w.r.t.   $P_\Xx$.	
	\end{theorem}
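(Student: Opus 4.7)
The plan is to prove the inequality in three stages: first for $v$ in the tangent cone $C_\mu(P_\Xx, \Dd_\Xx)$ via the parametric monotonicity already used in the proof of Theorem~\ref{thm:inv}(2); then extend to all of $T_\mu(P_\Xx, \Dd_\Xx)$ by identifying the logarithmic pushforward as a linear $L^2$-contraction; and finally treat the equality case by showing that a conditional mapping for $T$ acts as a left inverse of $T_*$ on tangent vectors.

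For $v \in C_\mu(P_\Xx, \Dd_\Xx)$, pick a $C^k$-curve $c : \R \to P_\Xx$ in $\Dd_\Xx$ with $c(0) = \mu$ and $\dot c(0) = v$. Since $T_* : \Ss(\Xx) \to \Ss(\Yy)$ is bounded linear by Proposition~\ref{prop:markov}(1), the composition $T_* \circ c$ is $C^k$ with derivative $T_* v$ at the origin, and the parametric monotonicity \cite[Corollary~5.1]{AJLS2017} yields $\g_{T_*\mu}(T_*v, T_*v) \le \g_\mu(v, v)$.

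To extend this to all of $T_\mu(P_\Xx, \Dd_\Xx)$ I would introduce the linear map $K : L^2(\Xx, \mu) \to L^2(\Yy, T_*\mu)$ given by $K(h) := dT_*(h\mu)/dT_*\mu$, which on tangent vectors agrees with the logarithmic representation $v \mapsto \log T_* v$. A direct duality computation using the definition of the Markov morphism gives
\[
\la K h, f\ra_{L^2(T_*\mu)} = \int_\Yy f \, dT_*(h\mu) = \int_\Xx T^*(f) \cdot h \, d\mu = \la h, T^*f\ra_{L^2(\mu)},
\]
so $K$ is the $L^2$-adjoint of the operator $T^*$ from (\ref{eq:adjoint1}). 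Since each $\overline T(x)$ is a probability measure, Jensen's inequality shows $T^* : L^2(\Yy, T_*\mu) \to L^2(\Xx, \mu)$ is a contraction, and hence so is $K$. Applied to $h := \log v$ for arbitrary $v \in T_\mu(P_\Xx, \Dd_\Xx)$, this gives $\|\log T_* v\|_{L^2(T_*\mu)} \le \|\log v\|_{L^2(\mu)}$, i.e.\ the desired inequality on the whole tangent space.

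For the equality statement, let $\underline\pb$ be a conditional mapping for $T$. Substituting $h = 1_A$ in (\ref{eq:suff}) and integrating against $T_*\xi$ yields $\underline\pb_*(T_*\xi) = \xi$ for every $\xi \in P_\Xx$. Applying this identity along a $C^k$-curve $c \in \Dd_\Xx$ through $\mu$ with $\dot c(0) = v \in C_\mu$ and differentiating at $t = 0$ gives $\underline\pb_*(T_* v) = v$, which extends by linearity to all of $T_\mu(P_\Xx, \Dd_\Xx)$. Since $(T_*(P_\Xx), T_*(\Dd_\Xx))$ is itself almost 2-integrable by Theorem~\ref{thm:inv}(2), the monotonicity already proved, applied to $\underline\pb$ acting on this pushforward model, yields
\[
\g_\mu(v,v) = \g_{\underline\pb_*(T_*\mu)}\bigl(\underline\pb_*(T_*v), \underline\pb_*(T_*v)\bigr) \le \g_{T_*\mu}(T_*v, T_*v),
\]
and combined with the reverse inequality this forces equality. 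The principal obstacle is the second stage: a quadratic-form inequality on a cone need not extend to its linear hull without additional structure, and the clean resolution is to recognise $K$ as the $L^2$-adjoint of the Markov-kernel contraction $T^*$.
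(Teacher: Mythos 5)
Your proof is correct and follows the same two-step strategy as the paper: the inequality is the monotonicity of Markov morphisms at the $L^2$-level (the paper simply cites \cite[Corollary 5.1]{AJLS2017} for the estimate (\ref{eq:mono})), and equality under sufficiency is obtained by exhibiting a conditional mapping $\underline\pb$ with $\underline\pb_*\circ T_* = \mathrm{id}$ on $P_\Xx$ and applying monotonicity a second time (the paper outsources this recovery map to \cite[Theorem 2.8.2]{JLLT2019}, whereas you derive $\underline\pb_*(T_*\xi)=\xi$ directly from Definition \ref{def:suff}). The one place where you genuinely add to the paper's write-up is your second stage: the paper establishes (\ref{eq:mono}) only for cone vectors $v\in C_\mu(P_\Xx,\Dd_\Xx)$ yet states the theorem for the whole tangent space $T_\mu(P_\Xx,\Dd_\Xx)$, and since a norm inequality valid on a cone need not pass to its linear hull, your identification of the logarithmic pushforward as the $L^2$-adjoint of the Jensen contraction $T^*$ is exactly the right way to close that gap (it is, in effect, the proof of the cited AJLS result made explicit on the full span).
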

	
	\begin{proof}  The monotonicity  assertion  of Theorem \ref{thm:mono}  follows  from  (\ref{eq:mono}). The second
		assertion  of  Theorem \ref{thm:mono}  follows   from the first   assertion,  taking into account Theorem 2.8.2 in \cite{JLLT2019}  that  states   the  existence  of a    probabilistic  mapping
		$\pb: \Yy \leadsto \Xx$   such that   $\pb  _*  (T_ *(P_\Xx))  = P_\Xx$, and  therefore $\pb_* (T_* (\Dd_\Xx)) = \Dd_\Xx$.
	\end{proof}
	
	Let us    apply   Theorem \ref{thm:mono} to  Example  \ref{ex:2-integr} (3)  originally from \cite{Friedrich1991}.   In \cite[Satz 1, p.274]{Friedrich1991}  Friedrich  considered  the  group
	$\Gg (\Xx, \Sigma_\Xx, \lambda)$ of all  measurable  1-1 mappings
	$\Phi: \Xx \to \Xx$  such that $\Phi_*(\lambda)\ll  \lambda$.  Clearly
	$\Phi_* (P (\lambda)) \subset   P (\lambda)$.   Example \ref{ex:inj} says   that  $\Phi$  is a sufficient     statistic   w.r.t. $P(\lambda)$.  Hence  Theorem  \ref{thm:mono}  implies the  following
	\begin{corollary}
		\label{cor:fried}  (\cite[Satz 1]{Friedrich1991})  The group
		$\Gg (\Xx, \Sigma_\Xx, \lambda)$ acts  isometrically  on  $P(\lambda)$.
	\end{corollary}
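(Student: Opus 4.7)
The plan is to deduce the corollary by combining Example \ref{ex:inj} with the equality case of Theorem \ref{thm:mono}, after verifying that each $\Phi \in \Gg(\Xx, \Sigma_\Xx, \lambda)$ induces a well-defined, structure-preserving action on the $C^1$-diffeological statistical model $(P(\lambda), \Dd(\lambda))$.

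First, I would check that every $\Phi \in \Gg$ restricts to a bijection of $P(\lambda)$. If $\mu = f\lambda \in P(\lambda)$, then $\Phi_*\mu = \Phi_*(f\lambda) \ll \Phi_*\lambda \ll \lambda$, so $\Phi_*(P(\lambda)) \subseteq P(\lambda)$. Since $\Gg$ is a group, the same argument applied to $\Phi^{-1}$ gives the opposite inclusion, hence $\Phi_*(P(\lambda)) = P(\lambda)$. Next, regarding $\Phi$ as the probabilistic mapping $\underline{\delta \circ \Phi}: \Xx \leadsto \Xx$, Theorem \ref{thm:inv}(2) implies that the pushforward of any almost 2-integrable $C^1$-curve $c \in \Dd(\lambda)$ is again almost 2-integrable, so $\Phi_* \circ c \in \Dd(\lambda)$. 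Thus $\Phi$ acts on $(P(\lambda), \Dd(\lambda))$ as an automorphism of $C^1$-diffeological statistical models.

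Second, Example \ref{ex:inj} says precisely that any measurable 1-1 mapping, including $\Phi$, is a sufficient probabilistic mapping with respect to any statistical model in $\Pp(\Xx)$; in particular $\Phi$ is sufficient with respect to $P(\lambda)$.

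Finally, I would apply the equality clause of Theorem \ref{thm:mono} to $T = \Phi$ and $(P_\Xx, \Dd_\Xx) = (P(\lambda), \Dd(\lambda))$. This yields
\[
\g_\mu(v,v) = \g_{\Phi_*\mu}(\Phi_* v, \Phi_* v)
\]
for every $\mu \in P(\lambda)$ and every $v \in T_\mu(P(\lambda), \Dd(\lambda))$. Since the Fisher metric is a positive quadratic form defined via the $L^2$-inner product (\ref{eq:rFisher}), polarization upgrades this to $\g_\mu(v,w) = \g_{\Phi_*\mu}(\Phi_* v, \Phi_* w)$, which is precisely the statement that $\Gg(\Xx, \Sigma_\Xx, \lambda)$ acts by isometries on $P(\lambda)$.

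I do not anticipate a serious obstacle: the heart of the matter is already contained in Theorem \ref{thm:mono} and Example \ref{ex:inj}. The only real bookkeeping is in the first step, where one must verify that $\Phi_*$ preserves both the underlying set $P(\lambda)$ and the diffeology $\Dd(\lambda)$; both of these follow cleanly, the latter from Theorem \ref{thm:inv}.
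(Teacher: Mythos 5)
Your proposal is correct and follows essentially the same route as the paper: note that $\Phi_*$ preserves $P(\lambda)$, invoke Example \ref{ex:inj} to see that the injective statistic $\Phi$ is sufficient w.r.t.\ $P(\lambda)$, and then apply the equality case of Theorem \ref{thm:mono}. The extra bookkeeping you supply (the reverse inclusion via $\Phi^{-1}$, the preservation of the diffeology via Theorem \ref{thm:inv}, and the polarization step) is implicit in the paper's much terser argument but does not change the approach.
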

	
	\begin{remark}\label{rem:mon} Theorem  \ref{thm:mono}  extends  the  Monotonicity  Theorem \cite[Theorem 5.5, p. 265]{AJLS2017}
		for 2-integrable  parameterized    statistical models.  \footnote{As  we  remarked in  Section \ref{sec:dis}, Theorem  \ref{thm:mono} can be  easily extended   to the  case of  almost $l$-integrable  $C^k$-diffeological measure  models.}  
	\end{remark}
	
	\section{The  Cram\'er-Rao inequality for  2-integrable diffeological  statistical models}\label{sec:cr}
	
	In this   section we  shall   prove  a  version of  the Cram\'er-Rao inequality   for  estimators  with values in a    2-integrable   $C^k$-diffeological  statistical   model.
	
	\begin{definition}
		\label{def:est}  Let   $P_\Xx\subset \Pp(\Xx)$  be a  statistical model. An {\it estimator}   is  a map  $\hat \sigma: \Xx \to P_\Xx$.
	\end{definition}
	
	Assume that  
	$V$ is a  locally convex  topological  vector  space. Then   we    denote by  $Map(P_\Xx, V)$  the space of all    mappings $\varphi:P_\Xx\to V$  and by $V'$  the topological   dual of $V$.
	It is usually  easier  to  estimate  only  a ``coordinate"  $\varphi(\xi)$ of a   probability measure $\xi \in P_\Xx$, which determines $\xi$  uniquely if  $\varphi$ is an embedding.
	
	\begin{definition}
		\label{def:vares}  Let  $P_\Xx$ be  a   statistical model  and $\varphi \in Map (P_\Xx, V)$.
		{\it  A $\varphi$-estimator $\hat \sigma_{\varphi}$}  is  a  composition  $\varphi\circ \hat \sigma:  \Xx \stackrel{\hat{\sigma}}{\to} P_\Xx \stackrel{\varphi} {\to} V$. 
	\end{definition}
	
	\begin{example}\label{ex:mke}   Assume that $k : \Xx \times \Xx  \to \R$ is a symmetric and positive definite kernel
		function  and $V$ be the  associated RKHS. For any $x\in \Xx$ we  denote  by  $k_x$  the function on $\Xx$ defined by $k_x (y):=  k(x, y)$  for any $y \in \Xx$.
		Then $k_x$ is an   element   of $V$. Let  $P_\Xx = \Pp(\Xx)$. Then  we define
		the {\it   kernel mean embedding} $\varphi: \Pp (\Xx) \to V$  as follows \cite{MFSS2017}
		$$ \varphi(\xi) : = \int_\Xx k_x d\xi(x),$$
		where  the integral   should be understood as a  Bochner integral.
	\end{example}

	\begin{remark}\label{rem:est}  (1)   In classical statistics, see  e.g.  \cite[\S 13, p. 51]{Borovkov1998}, \cite[p.4]{IH1981}, \cite[\S 4, p. 82]{AN2000}, \cite[Definition 5.1, p. 277]{AJLS2017},  one considers  only parameter estimations   for  parameterized   statistical models. In this case, an   estimator
		is a map from   $\Xx$ to  the    parameter  set $\Theta$ of
		a  statistical   model  $\pb (\Theta)\subset \Pp(\Xx)$.   Usually  one assumes that   the parametrization $\pb: \Theta \to \pb (\Theta)$  is 1-1,   hence,  a parameter  estimation is  equivalent to   a nonparametric  estimation  in the sense of Definition  \ref{def:est}.  Note     that  the
		ultimate aim of  a statistical experiment  is to estimate  the
		probability  measure  generating  the  observable of   the experiment. In general,  we can  only assume  that   the  unknown generating  probability measure   belongs to   a   statistical  model  $P_\Xx \subset \Pp(\Xx)$. In  this case, we
		need  to  use  non-parametric  estimation, see e.g. \cite[p. 1]{Tsybakov2009}.  Note that,  by Example \ref{ex:para},    $P_\Xx$    has  a natural  structure  of a $C^1$-diffeological  statistical model.

		(2)  The notion   of a $\varphi$-estimation    occurs  in classical    statistics in similar fashion,   see e.g. \cite[p. 52]{Borovkov1998},`  where  the author  called   similar estimators  {\it  substitution  estimators},  and in 
		\cite[Definition 1.2, p. 4]{LC1998}, where   the  authors  consider    {\it  estimands}, which  are  versions  of
		$\varphi$-estimators  for a  parameter  estimation  problem,
		see \cite[p. 279]{AJLS2017}.
		
	\end{remark}

	For   $\varphi \in Map(P_\Xx, V)$ and $l \in V'$ we    denote by $\varphi^l$ the composition  $l \circ \varphi$.  Then  we set
	$$L^2_{\varphi}(\Xx, P_\Xx): = \{\hat\sigma: \Xx \to P_\Xx|\: \varphi^l \circ \hat \sigma \in L^2 _\xi (\Xx) \text{ for all } \xi \in P_\Xx \text{  and } l \in V'\}.$$
	
	For   $\hat \sigma \in  L^2_\varphi(\Xx, P_\Xx)$  we define  the $\varphi$-mean value    of $\hat  \sigma$, denoted  by
	$\varphi_{\hat \sigma}: P_\Xx \to  V'' $,  as follows (cf. \cite[(5.54), p. 279]{AJLS2017})
	$$\varphi_{\hat \sigma}   (\xi) (l): = \E_\xi (\varphi^l \circ \hat \sigma) \text{  for }   \xi \in P_\Xx \text{ and } l \in V'. $$
	Let  us identify   $V$ with a  subspace   in $V^{''}$ via the canonical pairing.
	
	The  difference  $b^{\varphi}_{\hat{\sigma}}: = \varphi_{\hat{\sigma}}- \varphi \in Map(\Pp_\Xx,  V ^{''})$   will be called {\it the bias }  of the $\varphi$-estimator $\hat \sigma_{\varphi}$.
	
	For all $\xi \in P_\Xx$ we   define  a    quadratic function $MSE^{\varphi}_\xi [\hat \sigma]$     on $V'$, which   is 
	called  the {\it mean  square error  quadratic   function}  at $\xi$,   by setting  for $l, h \in V'$ (cf. \cite[(5.56), p. 279]{AJLS2017})
	\begin{equation}
	\label{eq:qd}
	MSE^\varphi _\xi[\hat \sigma] (l, h): = \E_\xi [(\varphi^l\circ \hat \sigma(x)-\varphi^l(\xi))\cdot  (\varphi^h\circ \hat \sigma(x)-\varphi^h(\xi))].
	\end{equation}
	
	Similarly we define   the {\it variance  quadratic  function}  of   the $\varphi$-estimator $\varphi\circ \hat \sigma$  at $\xi \in \Pp_\Xx$
	is     the quadratic  form $V^\varphi_\xi [\hat \sigma]$ on $V'$ such that for all   $l, h \in V'$ we have (cf.  \cite[(5.57), p.279]{AJLS2017})
	$$V^\varphi_\xi [\hat \sigma](l, h) = \E_\xi [\varphi^l\circ \hat \sigma(x)-E_\xi (\varphi^l\circ \hat \sigma(x))\cdot  \varphi^h\circ \hat \sigma(x)-E_\xi(\varphi^h\circ \hat \sigma(x))].$$
	Then it is known that \cite[(5.58), p. 279]{AJLS2017}
	\begin{equation}\label{eq:vmse}
	MSE^\varphi_\xi [\hat \sigma] (l, h) = V^\varphi_\xi [\hat \sigma](l,h) + \la b^\varphi_{\hat \sigma}(\xi) , l\ra  \cdot \la  b^\varphi_{\hat \sigma} (\xi), h\ra.
	\end{equation}
	
	\begin{remark}\label{rem:mse}  Assume  that $V$ is a  real Hilbert    space
		with  a scalar  product $\la \cdot, \cdot \ra$ and the  associated   norm $\|\cdot \|$.  Then   the  scalar  product  defines  a   canonical  isomorphism  $V = V', \:   v (w): = \la  v, w\ra$  for all $v, w \in V$.
		For  $\hat \sigma \in L^2_\varphi (\Xx, P_\Xx)$ 
		the mean square  error $MSE_\xi^\varphi(\hat{\sigma})$  of  the $\varphi$-estimator  $\varphi \circ \hat \sigma$ is defined 
		by
		\begin{equation}
		\label{eq:mse}
		MSE_\xi^\varphi (\hat \sigma): = 
		\E_\xi (\| \varphi \circ \hat \sigma  - \varphi(\xi)\| ^2).
		\end{equation}
		The   RHS  of (\ref{eq:mse}) is well-defined, since $\hat \sigma  \in L^2 _\varphi (\Xx, P_\Xx)$ and  therefore    
		$$\la \varphi\circ \hat  \sigma(x), \varphi \circ \hat \sigma (x) \ra  \in L^1 (\Xx, \xi) \text{
			and }\la \varphi \circ \hat  \sigma(x), \varphi(\xi) \ra \in  L^2 (\Xx, \xi).$$
		Similarly, we define    the variance  of a $\varphi$-estimator
		$\varphi\circ \hat \sigma$  at $\xi$   as follows
		$$V^{\varphi}_\xi (\hat \sigma): = \E_\xi (\|\varphi \circ  \hat \sigma - \E_\xi (\varphi \circ \hat \sigma)\| ^2).$$
		
		If  $V$  has  a countable  basis   of orthonormal  vectors $v_1, \cdots,
		v_\infty$, then  we  have
		\begin{equation}
		\label{eq:mse2}
		MSE^\varphi_\xi (\hat \sigma) = \sum_{i =1}^\infty MSE ^\varphi_\xi[\hat \sigma](v_i, v_i),
		\end{equation}
		\begin{equation}
		\label{eq:var2}
		V^\varphi_\xi (\hat \sigma) = \sum_{i =1}^\infty  V^\varphi_\xi[\hat \sigma](v_i, v_i).
		\end{equation}
	\end{remark}
	
	\
	
	Now  we assume that  $(P_\Xx, \Dd_\Xx)$ is  an  almost 2-integrable
	$C^k$-diffeological   statistical model.    For any $\xi \in P_\Xx$  
	let  $ T_{\xi}^\g (P_\Xx,\Dd_\Xx)$  be the completion of  $T_\xi (P_\Xx,\Dd_\Xx)$ w.r.t.  the     Fisher  metric $\g$. Since  $T_\xi ^\g (P_\Xx, \Dd_\Xx)$  is   a Hilbert  space,  the map  
	$$L_\g: T_\xi^\g (P_\Xx,\Dd_\Xx) \to (T_\xi^\g (P_\Xx,\Dd_\Xx))',\, L_\g (v)(w) := \la  v, w\ra_\g,  $$   is an isomorphism.
	Then we define  the  inverse $\g ^{-1}$ of the Fisher  metric $\g$ on $ (T_\xi^\g (P_\Xx, \Dd_\Xx))'$ as  follows
	\begin{equation}
	\label{eq:Finv}
	\la   L_g  v, L_g  w \ra _{\g^{-1}}: = \la  v, w\ra_\g
	\end{equation}
	
	\begin{definition}
		\label{def:reg}(cf. \cite[Definition 5.18, p. 281]{AJLS2017})
		Assume  that $\hat \sigma \in L^2_{\varphi}(\Xx, P_\Xx)$.  We shall call  $\hat \sigma$  {\it a $\varphi$-regular estimator},
		if for all  $l \in V'$   the function  $\xi \mapsto \|\varphi^l \circ  \hat \sigma\|_{L^2 (\Xx, \xi)}$ is locally bounded, i.e., 
		for all  $\xi_0 \in P_\Xx$ 
		$$\lim_{\xi \to \xi_0} \sup \|  \varphi^l\circ \hat \sigma  \|_{L^2 (\Xx, \xi)}  < \infty.$$
	\end{definition}

	\begin{proposition}\label{prop:reg}  Assume that   $(P_\Xx, \Dd_\Xx)$  is a  2-integrable $C^k$-diffeological statistical model,  $V$ is   a topological vector  space,  $\varphi \in Map (P_\Xx, V)$  and $\hat \sigma: \Xx \to P_\Xx$  is a $\varphi$-regular  estimator. Then  the  $V''$-valued function  $\varphi_{\hat \sigma}$  is  Gateaux-differentiable  on $(P_\Xx, \Dd_\Xx)$.  Furthermore  for any $l' \in V'$  the differential $d\varphi^l_{\hat \sigma}(\xi)$  extends to   an element  in $(T_\xi^\g(P_\Xx,\Dd_\Xx))'$  for all $\xi \in P_\Xx$.
		\label{prop:nabla1}
	\end{proposition}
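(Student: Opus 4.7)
My plan is first to reduce the Gateaux-differentiability of $\varphi_{\hat\sigma}:P_\Xx\to V''$ on $(P_\Xx,\Dd_\Xx)$ to a one-variable differentiation question: by Remark~\ref{rem:diffeo}(4) it suffices to fix a $C^k$-curve $c:\R\to P_\Xx$ in $\Dd_\Xx$ and a functional $l\in V'$, and to show that
\[
g(t):=\varphi^l_{\hat\sigma}(c(t))=\int_\Xx f\,dc(t),\qquad f:=\varphi^l\circ\hat\sigma,
\]
is differentiable on $\R$. Since almost $2$-integrability gives $\dot c(t)\ll c(t)$ with $\log\dot c(t):=d\dot c(t)/dc(t)\in L^2(\Xx,c(t))$, and since $f\in L^2(\Xx,c(t))$ by the hypothesis $\hat\sigma\in L^2_\varphi$, Cauchy--Schwarz makes the candidate derivative
\[
\psi(t):=\la f,\log\dot c(t)\ra_{L^2(\Xx,c(t))}
\]
well-defined and finite.

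For the differentiation itself I would use that $i\circ c:\R\to(\Ss(\Xx),\|\cdot\|_{TV})$ is $C^k$, so the Banach-valued fundamental theorem of calculus yields $c(t)-c(t_0)=\int_{t_0}^{t}\dot c(s)\,ds$ as a Bochner integral in $\Ss(\Xx)$. To convert this into a statement about $g$, I would bound the joint integrand by
\[
\int_{t_0}^{t}\!\!\int_\Xx|f|\,d|\dot c(s)|\,ds\le\int_{t_0}^{t}\|f\|_{L^2(c(s))}\,\|\log\dot c(s)\|_{L^2(c(s))}\,ds,
\]
where the first factor is locally bounded in $s$ by $\varphi$-regularity (Definition~\ref{def:reg}) and the second is continuous by $2$-integrability (Definition~\ref{def:2-integrable}). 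Combining this with the integrability of $f$ against the total variation of $c(t)-c(t_0)$ (which follows from $f\in L^2(c(t))\cap L^2(c(t_0))$), Fubini justifies
\[
g(t)-g(t_0)=\int_{t_0}^{t}\psi(s)\,ds.
\]

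The main obstacle is then to deduce $g'(t_0)=\psi(t_0)$, which requires continuity of $\psi$ at $t_0$. Here I would combine the $\|\cdot\|_{TV}$-continuity $\dot c(s)\to\dot c(t_0)$ (a consequence of $c\in C^1$) with a truncation $f=f\,1_{\{|f|\le n\}}+f\,1_{\{|f|>n\}}$: for each fixed $n$ the bounded part gives $O(\|\dot c(s)-\dot c(t_0)\|_{TV})\to 0$, while the tail is dominated via Cauchy--Schwarz by $\|f\,1_{\{|f|>n\}}\|_{L^2(c(s))}\cdot\|\log\dot c(s)\|_{L^2(c(s))}$, whose second factor is uniformly bounded near $t_0$ by $2$-integrability. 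The delicate step I expect to require the most care is showing that the tail norm $\|f\,1_{\{|f|>n\}}\|_{L^2(c(s))}$ tends to $0$ as $n\to\infty$ uniformly in $s$ in a neighbourhood of $t_0$; this refinement of $\varphi$-regularity should be extracted from $c(s)\to c(t_0)$ in total variation together with local boundedness of $\|f\|_{L^2(c(s))}$, enough to pass to joint continuity of $\|f\|_{L^2(c(s))}$ in $s$.

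For the second assertion, the candidate differential at $\xi\in P_\Xx$ is $d\varphi^l_{\hat\sigma}(\xi)(v)=\la f,\log v\ra_{L^2(\Xx,\xi)}$ for $v\in C_\xi(P_\Xx,\Dd_\Xx)$. Since $\log(\lambda v)=\lambda\log v$, this is linear in $v$ and extends linearly to the tangent space $T_\xi(P_\Xx,\Dd_\Xx)$ (its linear hull). By Cauchy--Schwarz,
\[
|d\varphi^l_{\hat\sigma}(\xi)(v)|\le\|f\|_{L^2(\Xx,\xi)}\cdot|v|_\g,
\]
so the differential is continuous with respect to the Fisher norm on $T_\xi(P_\Xx,\Dd_\Xx)$ and therefore extends uniquely and continuously to an element of $(T_\xi^\g(P_\Xx,\Dd_\Xx))'$, as claimed.
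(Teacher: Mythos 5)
Your overall architecture coincides with the paper's: reduce to a single $C^k$-curve $c:\R\to P_\Xx$ in $\Dd_\Xx$ (so that $(\R,\Xx,c)$ is a $2$-integrable parametrized statistical model), establish the derivative formula, and then extend $d\varphi^l_{\hat\sigma}(\xi)$ by Cauchy--Schwarz to a continuous functional on the completion $T^\g_\xi(P_\Xx,\Dd_\Xx)$. Your uncentred formula $\p_X\varphi^l_{\hat\sigma}=\la f,\log X\ra_{L^2(\Xx,\xi)}$ agrees with the paper's centred one in (\ref{eq:log1}) because $\int_\Xx\log X\,d\xi=X(\Xx)=0$ for any tangent vector of $\Pp(\Xx)$, and your abstract continuous-extension step is equivalent to the paper's realization of the differential as $L_\g$ applied to an orthogonal projection. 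The one place where you genuinely diverge is that the paper does not prove the one-variable differentiability at all: it invokes Lemmas 5.2 and 5.3 of \cite{AJLS2017}, whereas you attempt a self-contained argument.

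That attempt has a real gap, precisely at the step you yourself flag as delicate. The assertion that $\sup_{s}\|f\,1_{\{|f|>n\}}\|_{L^2(\Xx,c(s))}\to 0$ as $n\to\infty$, for $s$ in a neighbourhood of $t_0$, cannot be ``extracted from'' $\|c(s)-c(t_0)\|_{TV}\to 0$ together with local boundedness of $\|f\|_{L^2(\Xx,c(s))}$. Counterexample: on $\Xx=\N$ take $f(k)=k$ and $\mu_m=(1-m^{-2})\delta_1+m^{-2}\delta_m$; then $\mu_m\to\delta_1$ in total variation and $\|f\|^2_{L^2(\mu_m)}\le 2$, yet $\|f\,1_{\{|f|>n\}}\|^2_{L^2(\mu_m)}=1$ for every $m>n$. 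So the uniform tail decay you need is an additional uniform-integrability property of the family $\{|f|^2\,c(s)\}$ which does not follow from $\varphi$-regularity as defined in Definition \ref{def:reg} plus $TV$-continuity of $c$; without it your argument only shows that $g$ is locally Lipschitz with $g'=\psi$ at Lebesgue points of $\psi$, not Gateaux-differentiability at every point. This limit interchange is exactly what the citation of \cite[Lemma 5.2]{AJLS2017} is doing for the paper (its proof uses the full strength of $2$-integrability, i.e.\ the $C^1$-dependence of the half-densities $c(t)^{1/2}$, rather than mere $TV$-continuity of $c$), so to make your version self-contained you must either reproduce that finer argument or obtain the tail estimate by a different mechanism. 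The rest of your proposal --- the Fubini justification of $g(t)-g(t_0)=\int_{t_0}^t\psi(s)\,ds$ and the extension to $(T^\g_\xi(P_\Xx,\Dd_\Xx))'$ --- is sound.
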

	
	\begin{proof} Assume that  a  map 
		$c: \R \to P_\Xx$  belongs  to $\Dd_\Xx$.
		Then  $(\R, \Xx, c)$ is a  2-integrable parametrized   statistical model.  By Lemma  5.2 in \cite[p. 282]{AJLS2017}   the composition
		$\varphi_{\hat \sigma} \circ c$  is differentiable.  This proves the first  assertion of Proposition \ref{prop:reg}.
		
		Next  we shall  show  that  $d\varphi_{\hat \sigma}(\xi)$ extends  to an element  in  $(T_\xi^\g(P_\Xx, \Dd_\Xx))'$ for
		all $\xi  \in P_\Xx$. Let  $X \in  C_\xi (P_\Xx, \Dd_\Xx)$  and  $c: \R \to P_\Xx$    be a $C^k$-curve   such that $c(0) = \xi$ and $\dot c (0)= X$.   By  Lemma 5.3  \cite[p. 284]{AJLS2017}  we have
		\begin{equation}\label{eq:log1}
		\p_X (\varphi^l_{\hat \sigma})= \int_\Xx (\varphi ^\l \circ \hat \sigma  (x)  -\E_\xi (\varphi^l \circ \hat \sigma)\cdot  \log X\, d\xi(x),
		\end{equation}
		where $\varphi ^\l \circ \hat \sigma  (x)  -\E_\xi (\varphi^l \circ \hat \sigma) \in L^2 (\Xx, \xi)$.   Denote by  $\Pi_\xi: L^2(\Xx, \xi) \cdot \xi \to  T^\g_{\xi} P_\Xx$   the orthogonal   projection. Set
		\begin{equation}\label{eq:grad}
		\mathrm{grad}_g  (\varphi^l_{\hat \sigma} ): =  \Pi_\xi [(\varphi ^\l \circ \hat \sigma  (x)  -\E_\xi (\varphi^l \circ \hat \sigma) )\cdot \xi] \in  T_\xi^\g P_\Xx.
		\end{equation}
		Then  we  rewrite  (\ref{eq:log1}) as follows
		$$  \p_X (\varphi^l) = \la   \mathrm{grad}_g  (\varphi^l_{\hat \sigma} ),  X \ra _\g.$$
		Hence  $d\varphi ^l_{\hat \sigma}$ is the restriction of $L_\g (\mathrm{grad}_\g  (\varphi^l_{\hat \sigma} ))  \in (T_\xi^\g(P_\Xx,\Dd_\Xx))'$. This  completes  the proof   of Proposition
		\ref{prop:reg}.
	\end{proof}
	
	For any $\xi \in \Pp_\Xx$  we 
	denote  by  $(\g ^\varphi_{\hat \sigma}) ^{-1} (\xi)$  to be   the   following  quadratic form    on $V'$:
	\begin{equation}\label{eq:g1}
	(\g ^\varphi_{\hat \sigma})^{-1} (\xi)(l, k) : = \la d\varphi ^l_{\hat \sigma}, d\varphi^k_{\hat \sigma} \ra _{\g ^{-1}}(\xi): = \la \mathrm{grad}_\g  (\varphi^l_{\hat \sigma} ), \mathrm{grad}_\g  (\varphi^k_{\hat \sigma} )\ra .
	\end{equation}
	
	\begin{theorem}[Diffeological  Cram\'er-Rao inequality] 
		\label{thm:cr}  Let $(P_\Xx, \Dd_\Xx)$   be a   2-integrable   $C^k$-diffeological statistical model, $\varphi$  a $V$-valued function on $P_\Xx$ and  $\hat \sigma \in L^2_{\varphi} (\Xx, P_\Xx)$  a $\varphi$-regular  estimator.  Then the  
		difference $\V_{\xi}^\varphi[\hat \sigma] -   (\hat \g^{\varphi}_{ \hat \sigma})^{-1} (\xi)$     is   a  positive semi-definite  quadratic form on $V'$  for any $\xi \in P_\Xx$. 
	\end{theorem}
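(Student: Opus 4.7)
The plan is to reduce Theorem \ref{thm:cr} to the scalar Cram\'er-Rao inequality, which in turn amounts to the fact that an orthogonal projection in a Hilbert space is a contraction. First I would fix $\xi \in P_\Xx$ and $l \in V'$. Since both $V^\varphi_\xi[\hat \sigma]$ and $(\g^\varphi_{\hat\sigma})^{-1}(\xi)$ are symmetric bilinear forms on $V'$, positive semi-definiteness of their difference is equivalent to the diagonal inequality
\[
V^\varphi_\xi[\hat \sigma](l,l) \ \ge\ (\g^\varphi_{\hat\sigma})^{-1}(\xi)(l,l)
\]
for every $l \in V'$, so the problem is scalar.

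Next I would unpack both sides. Setting $f_l(x) := \varphi^l \circ \hat \sigma(x) - \E_\xi(\varphi^l \circ \hat \sigma)$, the $\varphi$-regularity of $\hat \sigma$ gives $f_l \in L^2(\Xx,\xi)$, and by definition $V^\varphi_\xi[\hat \sigma](l,l) = \|f_l\|_{L^2(\Xx,\xi)}^2$. The computation already carried out inside the proof of Proposition \ref{prop:reg}, namely formula (\ref{eq:grad}), identifies $\mathrm{grad}_\g(\varphi^l_{\hat \sigma})$ with the orthogonal projection $\Pi_\xi(f_l\cdot\xi)$ onto $T_\xi^\g P_\Xx$. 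Combined with (\ref{eq:g1}) this yields
\[
(\g^\varphi_{\hat\sigma})^{-1}(\xi)(l,l) = \|\mathrm{grad}_\g(\varphi^l_{\hat \sigma})\|_\g^2 = \|\Pi_\xi(f_l\cdot \xi)\|_\g^2.
\]

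The key geometric step is then to observe that the Fisher metric extends isometrically to the ambient space $L^2(\Xx,\xi)\cdot \xi \cong L^2(\Xx,\xi)$ via $f\cdot \xi \mapsto f$, with $\|f\cdot \xi\|_\g = \|f\|_{L^2(\Xx,\xi)}$. In this Hilbert space the tangent space $T_\xi^\g(P_\Xx,\Dd_\Xx)$ is a closed subspace, being by construction the completion of $T_\xi(P_\Xx,\Dd_\Xx)$ in the Fisher norm, and $\Pi_\xi$ is the orthogonal projection onto it. Since orthogonal projection is norm-nonincreasing,
\[
\|\Pi_\xi(f_l\cdot \xi)\|_\g^2 \ \le\ \|f_l\cdot \xi\|_\g^2 = \|f_l\|_{L^2(\Xx,\xi)}^2 = V^\varphi_\xi[\hat \sigma](l,l),
\]
which is exactly the desired diagonal inequality.

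The main delicate point I anticipate is purely book-keeping: verifying that the projection $\Pi_\xi$ appearing in (\ref{eq:grad}) really is the orthogonal projection onto the closed subspace $T_\xi^\g(P_\Xx,\Dd_\Xx)$ and not merely onto the possibly non-closed $T_\xi(P_\Xx,\Dd_\Xx)$, and that the isomorphism $L_\g$ used in (\ref{eq:Finv}) to define $\g^{-1}$ is compatible with this completion. Once that identification is nailed down, the entire argument reduces to Cauchy--Schwarz in $L^2(\Xx,\xi)$, in the same spirit as the parametrized version in \cite[Theorem 5.6, p.~284]{AJLS2017}, with the role of the image of a finite-dimensional parameter derivative replaced by the image of the diffeological tangent cone.
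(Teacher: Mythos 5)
Your proposal is correct and follows essentially the same route as the paper: reduce to the diagonal inequality for each $l \in V'$, identify $(\g^\varphi_{\hat\sigma})^{-1}(\xi)(l,l)$ with $\|\Pi_\xi(f_l\cdot\xi)\|_\g^2$ via (\ref{eq:grad}) and (\ref{eq:g1}), and conclude because the orthogonal projection onto $T_\xi^\g(P_\Xx,\Dd_\Xx)$ is norm-nonincreasing in $L^2(\Xx,\xi)$. The only difference is that the paper compresses the final step into ``clearly (\ref{eq:cr1}) follows from (\ref{eq:grad}),'' whereas you make the contraction property of $\Pi_\xi$ (and the closedness of the completed tangent space) explicit, which is a welcome clarification rather than a departure.
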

	
	\begin{proof}  To prove Theorem  \ref{thm:cr} it suffices  to show that for any  $l \in V'$ we have
		\begin{equation}\label{eq:cr1}
		\E_\xi  (\varphi^l\circ \hat \sigma  - \E_\xi (\varphi ^l \circ  \hat\sigma))^2 \ge  \|\mathrm{grad}_\g  (\varphi^l_{\hat \sigma} ))\|^2_\g.
		\end{equation}
		Clearly  (\ref{eq:cr1})    follows from (\ref{eq:grad}).  This completes  the proof of Theorem \ref{thm:cr}.
	\end{proof}

	Theorem \ref{thm:cr} is an extension  of  the general Cram\'er-Rao inequality \cite[Theorem 2]{LJS2017b}, see also \cite[Theorem 5.7, p. 286]{AJLS2017}.
	\section{Discussion}\label{sec:dis}
	The extension of   the notion
	of  a $k$-integrable  parametrized measure  model introduced  in \cite{AJLS2015, AJLS2018}, see  also \cite{AJLS2017},  to   the  notion of
	an almost  $k$-integrable     diffeological measure  model  can  be done in    the same way.

	(2)  There  are  two  main  differences  between  parameterized  statistical  models  and    $C^k$-diffeological     statistical models.  Firstly,   the parameter  space  of a  parameterized      statistical model is a {\it single  smooth  Banach  manifold} and
	{\it parameter  spaces}  for  a   $C^k$-diffeological   statistical model   can be  {\it different  but compatible}. Secondly, 
	parameter
	spaces for a  $C^k$-diffeological  statistical  model     are  {\it finite  dimensional}. If $k = \infty$, this assumption is well-motivated \cite{IZ2013}, see also Remark \ref{rem:diffeo} (2).

	(3) It would be  interesting  to apply  theory of    $C^k$-statistical  models   to    stochastic  processes.   It  is known that  Banach manifolds  are not suitable   for many  question of global analysis, see  e.g., \cite[p. 1]{KM1997}, and therefore,   theory  of parameterized  measure  models  might have  limited   applications to    stochastic  processes.   On the other hand, we would like to  notice  that  there are    many open questions  in        theory of $C^\infty$-diffeological spaces,  e.g.,    we don't   know   under which  condition we   can define
	the Levi-Civita  connection   on  a Riemannian $C^\infty$-diffeological  space. Furthermore, theory  of $C^k$-diffeological
	spaces   has not been  considered  before, if $k \not = \infty$.

	(4) The        variational  calculus  founded by Leibniz and Newton   is  a cornerstone   of 
	differential geometry and  modern  analysis. In our opinion  it   is best    expressed in the language of   diffeological spaces  that  declares  which      mappings into a diffeological space  are smooth. This language   is  a   counterpart   of the  language of  ringed spaces  in algebraic geometry  that  declares  which   functions are  algebraic.

	\vspace{6pt}

	\section*{Acknowledgement}
	The author   would like to thank Patrick  Iglesias-Zemmour   for  a stimulating discussion on diffeology,   Lorenz Schwachh\"ofer   for helpful  comments on  an early version
		of this  paper  and Tat  Dat To for suggesting to consider      Friedrich's  examples  in \cite{Friedrich1991}.  A part  of this paper has been done  during the  Workshop ``Information Geometry" in Toulouse  October 14-18, 2019. The author  would like to thank the organizers  and especially  Stephane
		Puechmorel for their invitation  and hospitality  during the  workshop.  The author  is grateful to   anonymous  referees  for their     critical   comments  and suggestions,     which  helped her to   improve  significantly the exposition  of    this  paper.

\end{document}